\theoremstyle{plain}
\newtheorem{Lem}{Lemma}[section]
\newtheorem{Cor}[Lem]{Corollary}
\newtheorem{Thm}[Lem]{Theorem}
\theoremstyle{definition} 
\newtheorem{Rk}[Lem]{Remark}
\newtheorem{Def}[Lem]{Definition}}
\newcommand{\zig}{\addtocounter{Lem}{1}\tag{\theLem}}
\def\:{\colon} 
\DeclareMathOperator*{\colim}{colim}
\DeclareMathOperator*{\holim}{holim}
\DeclareMathOperator*{\holimG}{holim^\mathit{G}}
\DeclareMathOperator*{\holimH}{holim^\mathit{H}}
\begin{document}
\title{Explicit fibrant replacement for discrete $G$-spectra}

\author{Daniel G. Davis}
\begin{abstract}
If $\mathcal{C}$ is the model category of simplicial presheaves on a 
site with enough points, with fibrations equal to the global fibrations, 
then it is well-known that the fibrant objects are, in general, mysterious. Thus, it is not 
surprising that, when $G$ is a profinite group, the fibrant objects in the model 
category of discrete $G$-spectra are also difficult to get a handle on. However, 
with simplicial presheaves, it is possible to construct an explicit fibrant model 
for an object in $\mathcal{C}$, under certain finiteness 
conditions. Similarly, in this paper, we show that if $G$ has 
finite virtual cohomological dimension and $X$ is a discrete $G$-spectrum, then 
there is an 
explicit fibrant model for $X$. Also, we give several applications of this 
concrete model related to closed subgroups of $G$.  
\end{abstract}

\maketitle

\section{Introduction}
 
\par
In this paper, $G$ always denotes a profinite group and, by ``spectrum," we 
mean a Bousfield-Friedlander spectrum of simplicial sets. 
In particular, a {\em discrete $G$-spectrum} is a $G$-spectrum such that each 
simplicial set $X_k$ is a simplicial object in the category of discrete 
$G$-sets (thus, the action map on the 
$l$-simplices, \[G \times (X_k)_l 
\rightarrow (X_k)_l,\] is continuous when $(X_k)_l$ is regarded as a discrete 
space, for all $l \geq 0$). The category of discrete $G$-spectra, with morphisms being 
the $G$-equivariant maps of spectra, is denoted by $\mathrm{Spt}_G$. 
\par
As shown in \cite[Section 3]{cts}, $\mathrm{Spt}_G$ is a simplicial 
model category, where a morphism 
$f$ in $\mathrm{Spt}_G$ is a weak equivalence (cofibration) if and only if 
$f$ is a weak equivalence (cofibration) in $\mathrm{Spt}$, the simplicial model 
category of 
spectra. Given $X \in \mathrm{Spt}_G$, 
the {\em homotopy fixed point 
spectrum} $X^{hG}$ is the total right derived functor of fixed 
points: $X^{hG} = (X_{f,G})^G,$ where $X \rightarrow X_{f,G}$ is a trivial 
cofibration and $X_{f,G}$ is fibrant, all in 
$\mathrm{Spt}_G$. This definition generalizes the classical definition of 
homotopy fixed point spectrum, in the case when $G$ is a finite group (see 
\cite[pg. 337]{cts}). 

\par
Notice that we can loosen up the requirements on $X_{f,G}$. If 
$X \rightarrow X^f$ is a weak equivalence, with $X^f$ fibrant, all in $\mathrm{Spt}_G$, then, by the right lifting property of a fibrant object, there is a 
weak equivalence $X_{f,G} \rightarrow X^f$ in $\mathrm{Spt}_G$, so that 
\[X^{hG} = (X_{f,G})^G \rightarrow (X^f)^G\] is a weak equivalence. Thus, 
we can identify $X^{hG}$ and $(X^f)^G$, and, hence, we only need a fibrant 
replacement $X^f$ to form 
$X^{hG}$. Henceforth, we relabel any such $X^f$ as $X_{f,G}$ and refer to it as a 
{\it globally fibrant model} for $X$. (Thus, from now on, $X \rightarrow X_{f,G}$ does 
not have to be a cofibration.)

\par
The preceding discussion shows that a globally fibrant model $X_{f,G}$ is 
an important object. Of course, the model category axioms guarantee that 
$X_{f,G}$ always exists. But it is reasonable to ask for more. For example, 
in $\mathrm{Spt}$, there is a 
functor \[Q \: \mathrm{Spt} \rightarrow \mathrm{Spt}, \ \ \ Z \mapsto Q(Z) = 
Z_\mathtt{f},\] where $Z_\mathtt{f}$ is a fibrant spectrum, 
\[(Z_\mathtt{f})_k = \colim_n \Omega^n (\mathrm{Ex}^\infty(Z_{k+n})),\] and 
there is a natural weak equivalence $Z \rightarrow Z_\mathtt{f}$ (see, 
for example, \cite[pg. 524]{Thomason}). Hence, for the model category 
$\mathrm{Spt}$, there is always an explicit model for fibrant replacement. Similarly, 
it is natural to wonder if an explicit model 
for $X_{f,G}$ is available.  
\par
But there is a difficulty with this. Let 
$G\negthinspace-\negthinspace\mathbf{Sets}_{df}$ be the Grothendieck site 
of finite discrete $G$-sets (e.g., see \cite[Section 6.2]{Jardine}). 
There is an equivalence between 
$\mathrm{Spt}_G$ and the category of 
sheaves of spectra on 
$G\negthinspace-\negthinspace\mathbf{Sets}_{df}$ (the discrete $G$-spectrum 
$X$ corresponds to the sheaf of spectra $\mathrm{Hom}_G(-,X)$; 
see \cite[Section 3]{cts} for details), 
and it is well-known that, in general, for categories of simplicial presheaves, 
presheaves of spectra, and sheaves of spectra, there is no known explicit 
model for a globally fibrant object. In fact, the situation is such that 
\cite[pg. 1049]{GJlocal} says 
that ``[t]he fibrant objects in all of these theories continue to be really quite 
mysterious" (a similar statement 
appears in \cite[between Corollary 19 and Definition 20]{Jardinesummary}).

\par
Nevertheless, under certain hypotheses, explicit models 
for globally fibrant objects are available in the cases of 
simplicial presheaves and presheaves of spectra. Such 
results are based on Jardine's result in 
\cite[Proposition 3.3]{Jardinejpaa}, which constructs an 
explicit globally fibrant model for a simplicial presheaf $P$ on 
the site \'et$\,|_S$, where $P$ and the scheme $S$ must satisfy certain finiteness conditions (and other hypotheses). For example, under similar 
finiteness conditions, \cite[Proposition 3.20]{Mitchell} 
follows the proof of Jardine's result to obtain 
a concrete globally fibrant model for a presheaf of 
spectra on a site with enough points. 

\par
Now suppose that $G$ has finite virtual cohomological dimension (see Definition \ref{vcd}) and that $X$ is a discrete $G$-spectrum. In this paper, we show that there is an explicit model for 
$X_{f,G}$, by expressing the homotopy limit for diagrams in 
$\mathrm{Spt}_G$ in terms of the 
homotopy limit for diagrams in $\mathrm{Spt}$ (see Theorem \ref{iso}) and 
by modifying the 
proof of \cite[Theorem 7.4]{cts} (which 
applies the two results cited above, \cite[Proposition 3.3]{Jardinejpaa} and 
\cite[Proposition 3.20]{Mitchell}). We refer the reader to Theorem \ref{main} 
for the precise statement of our main result; its formulation depends on definitions that are given in Section \ref{model}.

\par
Let $H$ be a closed subgroup of $G$. If $Y \rightarrow Z$ is a weak equivalence in 
$\mathrm{Spt}_H$, such that $Y$ is a globally 
fibrant model for $X$ in $\mathrm{Spt}_G$ 
and $Z$ is a globally fibrant model for $X$ in $\mathrm{Spt}_H$, then we label the map $Y \rightarrow Z$ as $r^G_H$. 
Note that the map \[X_{f,G} \rightarrow (X_{f,G})_{f,H},\] a 
weak equivalence in $\mathrm{Spt}_H$, can be labelled as $r^G_H$, so that 
$r^G_H$ always exists. 
In Corollary \ref{mapGH}, 
we show that the explicit globally fibrant model constructed in Theorem \ref{main} 
yields an 
explicit model for $r^G_H$ (where, as before, we assume that 
$G$ has finite virtual cohomological dimension).
\par
Section \ref{hH} explains that, when $H$ is a closed normal subgroup of 
$G$ and $X$ is a discrete $G$-spectrum, 
there are cases when $X^{hH}$, unlike $X^H$, is not known to be a 
$G/H$-spectrum. In Corollary \ref{G/H}, we point out 
that, if $G$ has finite virtual cohomological 
dimension, then Theorem 
\ref{main} implies that $X^{hH}$ can always be taken to be a $G/H$-spectrum.
\par
Throughout this paper, $U <_o G$ means that $U$ is an open subgroup of 
$G$.
\vspace{.1in}
\par
\noindent
\textbf{Acknowledgements.} I thank Mark Hovey for a helpful conversation. 
 
\section{Homotopy limits in the category of discrete $G$-spectra} 

\par 
To explicitly construct the desired fibrant discrete $G$-spectrum, we first need to 
understand homotopy limits in the category of discrete $G$-spectra. 
Thus, we begin this section by following the presentation in 
\cite{Hirschhorn} to give the general 
definition of the homotopy limit of a {\em $\mathcal{C}$-diagram} $X(-)$ in $\mathcal{M}$, where $\mathcal{M}$ is a simplicial model category and $X(-)$ is a diagram in 
$\mathcal{M}$ indexed by a small category $\mathcal{C}$. 

\par
Recall that, for a small category $\mathcal{C}$, the {\em classifying space} of $\mathcal{C}$ 
is the simplicial set $B\mathcal{C}$, with $l$-simplices $(B\mathcal{C})_l$ equal to 
the set of compositions 
\[c_0 \overset{\sigma_0}{\longrightarrow} c_1 
\overset{\sigma_1}{\longrightarrow} 
\cdots \overset{\sigma_{l-1}}{\longrightarrow} c_l\] in $\mathcal{C}$ (see, for example, 
\cite[Definition 14.1.1]{Hirschhorn} for the definition of the face and degeneracy maps). 

\begin{Def}[{\cite[Definition 18.1.8]{Hirschhorn}}]\label{holim}
As above, let $\mathcal{M}$ be a simplicial model category and let $\mathcal{C}$ be a small category. Also, let $X = X(-)$ be a $\mathcal{C}$-diagram in 
$\mathcal{M}$; that is, $X$ is a functor $\mathcal{C} \rightarrow \mathcal{M}$, so 
that, for example, if $c \rightarrow d$ is a morphism in $\mathcal{C}$, then $X(c) \rightarrow X(d)$ is a morphism in $\mathcal{M}$. 
Then the {\em homotopy limit} of $X$ in $\mathcal{M}$,
$\holim_\mathcal{C}^\mathcal{M} X$, is defined to be the equalizer of the 
diagram 
\[\xymatrix{ 
\prod_{c \in \mathcal{C}} X(c)^{B(\mathcal{C} \downarrow c)} 
\ar@<1ex>[r]^-{\alpha} \ar@<-.5ex>[r]_-{\beta} &
\prod_{\sigma \: c \rightarrow d} X(d)^{B(\mathcal{C} \downarrow c)}
},\] 
where the second product is indexed over all the morphisms in $\mathcal{C}$. Here, the 
map $\alpha$ is defined as follows: the projection of $\alpha$ onto the factor indexed by 
$\sigma \: c_0 \rightarrow c_1$ is equal to composing the projection 
\[\textstyle\prod_{c \in \mathcal{C}} X(c)^{B(\mathcal{C} \downarrow c)} \rightarrow 
X(c_0)^{B(\mathcal{C} \downarrow c_0)}\] with the canonical map 
$X(c_0)^{B(\mathcal{C} \downarrow c_0)} 
\rightarrow X(c_1)^{B(\mathcal{C} \downarrow c_0)}.$ The 
map $\beta$ is defined by letting the projection of $\beta$ onto the factor indexed by 
$\sigma$ be given by composing the projection 
\[\textstyle\prod_{c \in \mathcal{C}} X(c)^{B(\mathcal{C} \downarrow c)} \rightarrow 
X(c_1)^{B(\mathcal{C} \downarrow c_1)}\] with the canonical map 
$X(c_1)^{B(\mathcal{C} \downarrow c_1)} \rightarrow X(c_1)^{B(\mathcal{C} 
\downarrow c_0)}$ 
that is induced by the map 
$B(\mathcal{C} \negthinspace \downarrow \negthinspace c_0) 
\rightarrow B(\mathcal{C} \negthinspace \downarrow \negthinspace c_1).$  
\end{Def} 
 
\begin{Rk}\label{commute}
Note that the homotopy limit is an equalizer of a diagram involving products and 
cotensors, and, given a simplicial set $K$, the cotensor functor $(-)^K \: \mathcal{M} 
\rightarrow \mathcal{M}$ is a right adjoint. Hence, the homotopy limit 
$\holim_\mathcal{C}^\mathcal{M} (-)$ commutes with limits in 
$\mathcal{M}$.
\end{Rk}
 
\par
If $X$ and $Z$ are $\mathcal{C}$-diagrams in $\mathrm{Spt}_G$ and 
$\mathrm{Spt}$, respectively, then we use the less 
cumbersome $\holim_\mathcal{C}^G X$ and 
$\holim_\mathcal{C} Z$ to denote $\holim_\mathcal{C}^{\mathrm{Spt}_G} X$ and 
$\holim_\mathcal{C}^\mathrm{Spt} Z$, respectively. As 
in Definition \ref{holim}, given $c \in \mathcal{C}$, $X(c)$ is the object in 
$\mathcal{M}$ that is indexed by $c$ in the diagram $X$. 

\begin{Thm}\label{iso}
If $X$ is a $\mathcal{C}$-diagram in $\mathrm{Spt}_G$, then there is an 
isomorphism 
\[\holimG_\mathcal{C} X \cong 
\colim_{N \vartriangleleft_o G} (\holim_\mathcal{C} X)^N.\]
\end{Thm}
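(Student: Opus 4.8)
The plan is to compute $\holimG_\mathcal{C} X$ by pushing the construction of Definition \ref{holim} through the passage from $\mathrm{Spt}$ to $\mathrm{Spt}_G$, keeping track of what happens to each of its three ingredients: the (infinitary) products, the cotensors by the simplicial sets $B(\mathcal{C}\downarrow c)$, and the equalizer. To organize this, let $\mathrm{Spt}(G)$ denote the category of $G$-spectra carrying \emph{no} continuity condition on the action. It is complete and cocomplete, and it is tensored and cotensored over the category of simplicial sets, with $M\otimes K$, $M^K$ and all (co)limits formed on underlying spectra and given the evident $G$-action; hence the formula of Definition \ref{holim} also makes sense in $\mathrm{Spt}(G)$ and defines $\holim^{\mathrm{Spt}(G)}_\mathcal{C}(-)$. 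By construction, the forgetful functor $\mathrm{Spt}(G)\to\mathrm{Spt}$ preserves these products, cotensors, and equalizers, so it carries $\holim^{\mathrm{Spt}(G)}_\mathcal{C}$ of a diagram to $\holim_\mathcal{C}$ of its underlying diagram. Writing $iX$ for the diagram $X$ viewed in $\mathrm{Spt}(G)$, this says that the underlying spectrum of $\holim^{\mathrm{Spt}(G)}_\mathcal{C} iX$ is $\holim_\mathcal{C} X$, equipped with the $G$-action induced by functoriality from the $G$-action on $X(-)$.

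Next, recall (see \cite[Section 3]{cts}) that the full inclusion $i\: \mathrm{Spt}_G \hookrightarrow \mathrm{Spt}(G)$ has a right adjoint $(-)_d$, sending $M$ to its largest discrete sub-$G$-spectrum; since $G$ is profinite, an element of a $G$-set has open stabilizer precisely when it is fixed by some open normal subgroup, so that $(-)_d$ is computed levelwise and in each simplicial degree by
\[M \longmapsto \colim_{N\vartriangleleft_o G} M^{N}.\]
Being a right adjoint, $(-)_d$ preserves all limits; and since $i$ preserves the simplicial tensor ($i(A\otimes K)$ and $(iA)\otimes K$ are both $A\wedge K_+$ formed levelwise), the adjunction $i\dashv(-)_d$ is a simplicial adjunction, so $(-)_d$ also preserves cotensors. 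In particular, for $A\in\mathrm{Spt}_G$ and a simplicial set $K$ the cotensor $A^K$ in $\mathrm{Spt}_G$ is $\bigl((iA)^K\bigr)_d$, products in $\mathrm{Spt}_G$ are $(-)_d$ of products formed in $\mathrm{Spt}(G)$, and likewise for equalizers.

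Now apply $(-)_d$ to the equalizer presentation of $\holim^{\mathrm{Spt}(G)}_\mathcal{C} iX$ supplied by Definition \ref{holim}. Because $(-)_d$ preserves the equalizer, the products, and the cotensors that appear, and because each $X(c)$ is already a discrete $G$-spectrum, the result is precisely the equalizer presentation of $\holimG_\mathcal{C} X$ in $\mathrm{Spt}_G$ (the structure maps $\alpha,\beta$ matching up by naturality). Hence
\[\holimG_\mathcal{C} X \;\cong\; \bigl(\holim^{\mathrm{Spt}(G)}_\mathcal{C} iX\bigr)_d \;=\; \colim_{N\vartriangleleft_o G}\bigl(\holim^{\mathrm{Spt}(G)}_\mathcal{C} iX\bigr)^{N} \;=\; \colim_{N\vartriangleleft_o G}(\holim_\mathcal{C} X)^{N},\]
the last step using the first paragraph to replace $\holim^{\mathrm{Spt}(G)}_\mathcal{C} iX$ by $\holim_\mathcal{C} X$ with its induced $G$-action, which has the same $N$-fixed points. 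This is the asserted isomorphism.

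The step that will require the most care is the third one, and the reason is exactly that $\holim_\mathcal{C}$ is not itself a limit: because it also involves cotensors by the possibly infinite simplicial sets $B(\mathcal{C}\downarrow c)$, it is not enough to know that $(-)_d$ preserves limits, and one genuinely uses that $i\dashv (-)_d$ is a \emph{simplicial} adjunction. This framing is also what lets one avoid the false temptation to distribute $\colim_{N\vartriangleleft_o G}$ past the infinite products occurring in $\mathrm{Spt}_G$: the colimit must be kept on the outside, and it is $(-)_d$, viewed as a single right adjoint, that does the work. Beyond this, the verification is a routine diagram chase.
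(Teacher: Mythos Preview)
Your proof is correct and takes a genuinely different, more conceptual route than the paper. The paper argues hands-on: it writes out the $\mathrm{Spt}_G$-equalizer defining $\holimG_\mathcal{C}X$, replaces each $\mathrm{Spt}_G$-product and $\mathrm{Spt}_G$-cotensor by its explicit description $\colim_{N\vartriangleleft_o G}(-)^N$ applied to the corresponding $\mathrm{Spt}$-construction, invokes the elementary identity $(\colim_{N\vartriangleleft_o G}Y^N)^U\cong Y^U$ for $U\vartriangleleft_o G$ to collapse iterated colimits, and finally commutes the filtered colimit past the (finite) equalizer. You instead package all of this into a single observation: the inclusion $i\:\mathrm{Spt}_G\hookrightarrow\mathrm{Spt}(G)$ into \emph{all} $G$-spectra has right adjoint $(-)_d=\colim_{N\vartriangleleft_o G}(-)^N$, the adjunction is simplicial because $i$ preserves tensors, and therefore $(-)_d$ preserves in one stroke the equalizer, the products, and the cotensors that make up $\holim$. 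What your approach buys is economy and generality---the same argument shows that any simplicial right adjoint commutes with the Bousfield--Kan formula---and it sidesteps the bookkeeping of iterated $\colim_N(-)^N$'s. What the paper's approach buys is self-containment: it never names the auxiliary category $\mathrm{Spt}(G)$ or the adjunction, and it makes visible exactly which elementary identities (in particular $(\colim_N Y^N)^U\cong Y^U$ and the commutation of filtered colimits with equalizers) are doing the work. The two arguments are, of course, the same computation viewed at different levels of abstraction; your identification of $(M^K)_d$ with the $\mathrm{Spt}_G$-cotensor of $M_d$ is exactly the paper's formula $(Y^K)_G=\colim_N(Y^K)^N$, and the unit isomorphism $X(c)\cong(iX(c))_d$ is exactly the paper's $\colim_N X(c)^N\cong X(c)$.
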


\begin{proof}
For each $c \in \mathcal{C}$, let $B_c$ denote the simplicial set 
$B(\mathcal{C} \negthinspace \downarrow \negthinspace c)$. Given a discrete 
$G$-spectrum $Y$ and a simplicial set $K$, let $(Y^K)_G$ and $Y^K$ 
denote the cotensor objects in $\mathrm{Spt}_G$ and $\mathrm{Spt}$, 
respectively. Also, let $\prod^G$ and $\prod$ denote products in $\mathrm{Spt}_G$ and 
$\mathrm{Spt}$, respectively. 
Then, by Definition \ref{holim}, $\holim_\mathcal{C}^G X$ is the equalizer of the 
diagram 
\[\xymatrix{ 
\prod^G_{c \in \mathcal{C}} (X(c)^{B_c})_G
\ar@<1ex>[r]^-{\alpha} \ar@<-.5ex>[r]_-{\beta} &
\prod^G_{\sigma \: c \rightarrow d} (X(d)^{B_c})_G
}\] in $\mathrm{Spt}_G$.  

\par
To go further, we note how limits and cotensors in 
$\mathrm{Spt}_G$ are formed. Recall from \cite[Remark 4.2]{cts} that, if 
$\{Y_\alpha\}_\alpha$ is any diagram in $\mathrm{Spt}_G$, 
then the limit of this 
diagram in the category $\mathrm{Spt}_G$ is 
$\colim_{N \vartriangleleft_o G} (\lim_\alpha Y_\alpha)^N,$ where the limit in 
this expression is taken in the category of spectra. The colimit in this expression, and 
others like it, can be taken in the category of spectra, since the forgetful functor 
$\mathrm{Spt}_G \rightarrow \mathrm{Spt}$ is a left adjoint, by \cite[Corollary 3.8]{cts}. 
Given 
a discrete $G$-spectrum $Y$ and a simplicial set $K$, the spectrum $Y^K$ can be 
regarded as a $G$-spectrum (but not necessarily a discrete $G$-spectrum), 
by using only the $G$-action on $Y$. Then 
\[(Y^K)_G = \colim_{N \vartriangleleft_o G} (Y^K)^N\] (e.g., see 
\cite[(1.2.2)]{hGal} and \cite[pg. 42]{Jardinejpaa}). We apply these observations 
as follows. 
\par
First of all, note that $\holim_\mathcal{C}^G X$ is the equalizer in $\mathrm{Spt}_G$ of 
the diagram 
\begin{displaymath}
\xymatrix{ 
\displaystyle{\colim_{N \vartriangleleft_o G}}(\textstyle\prod_{c \in \mathcal{C}} 
(X(c)^{B_c})_G)^N
\ar@<1ex>[r]^-{\alpha} \ar@<-.5ex>[r]_-{\beta} & 
\displaystyle{\colim_{N \vartriangleleft_o G}}(\textstyle\prod_{\sigma \: c \rightarrow d} 
(X(d)^{B_c})_G)^N}.
\end{displaymath}
Furthermore, 
let $S$ be a $G$-set (but not necessarily a discrete $G$-set) and let $U$ be an open normal subgroup of $G$. Then it 
is clear that $(\bigcup_{N \vartriangleleft_o G} S^{N})^U \subset S^U$. 
Since $U \in \{N \, | \, N \vartriangleleft_o G\}$, $S^U \subset 
\bigcup_{N \vartriangleleft_o G} S^{N},$ and, hence, $S^U \subset 
(\bigcup_{N \vartriangleleft_o G} S^{N})^U.$ Thus, we can conclude that 
\[S^U = (\textstyle\bigcup_{N \vartriangleleft_o G} S^{N})^U.\] Similarly, if $Y$ is a $G$-spectrum, 
\[Y^U \cong  (\colim_{N \vartriangleleft_o G} Y^{N})^U.\] Therefore, 
\[(\textstyle\prod_{c \in \mathcal{C}} (X(c)^{B_c})_G)^U = 
\textstyle\prod_{c \in \mathcal{C}} 
(\displaystyle\colim_{N \vartriangleleft_o G} (X(c)^{B_c})^N)^U \cong 
\textstyle\prod_{c \in \mathcal{C}} (X(c)^{B_c})^{U},\] and, similarly, 
\[(\textstyle\prod_{\sigma \: c \rightarrow d} (X(d)^{B_c})_G)^U \cong 
\textstyle\prod_{\sigma \: c \rightarrow d} (X(d)^{B_c})^{U}.\]

\par
The preceding 
two isomorphisms imply that $\holim_\mathcal{C}^G X$ is isomorphic to 
the equalizer in 
$\mathrm{Spt}_G$ of the diagram 
\[\xymatrix{ 
\displaystyle\colim_{N \vartriangleleft_o G}(\textstyle\prod_{c \in \mathcal{C}} 
X(c)^{B_c})^N 
\ar@<1ex>[r]^-{\alpha} \ar@<-.5ex>[r]_-{\beta} &
\displaystyle\colim_{N \vartriangleleft_o G} 
(\textstyle\prod_{\sigma \: c \rightarrow d} X(d)^{B_c})^N.}\] 
Thus, $\holim_\mathcal{C}^G X \cong \colim_{N \vartriangleleft_o G} \mathcal{E}^N$, 
where $\mathcal{E}$ is the equalizer in $\mathrm{Spt}$ of the diagram
\[\xymatrix{ 
\textstyle{\colim_{N \vartriangleleft_o G}} (\textstyle\prod_{c \in \mathcal{C}} 
X(c)^{B_c} 
\ar@<.5ex>[r]^-{\alpha'} \ar@<-.6ex>[r]_-{\beta'} &
\textstyle\prod_{\sigma \: c \rightarrow d} X(d)^{B_c})^N,}\] 
where $\alpha'$ and $\beta'$ are the maps in the equalizer diagram for 
$\holim_\mathcal{C} X$. 

\par
Since filtered colimits and finite limits commute, 
$\mathcal{E} \cong \colim_{N \vartriangleleft_o G} (\mathcal{E}')^N,$ where 
$\mathcal{E}'$ is the equalizer in $\mathrm{Spt}$ of the diagram 
\[\xymatrix{\prod_{c \in \mathcal{C}} 
X(c)^{B_c} \ar@<.5ex>[r]^-{\alpha'} \ar@<-.6ex>[r]_-{\beta'} & 
\prod_{\sigma \: c \rightarrow d} X(d)^{B_c}.}\] Notice that 
$\mathcal{E}' = \holim_\mathcal{C} X$. 
\par
If $U$ is an open normal subgroup of $G$, then 
$(\holim_\mathcal{C} X)^U$ is a $G/U$-spectrum, so that 
$\colim_{N \vartriangleleft_o G} (\holim_\mathcal{C} X)^N$ is a discrete 
$G$-spectrum. Also, given $Y \in \mathrm{Spt}_G$, 
there is an isomorphism $\colim_{N \vartriangleleft_o G} Y^N \cong Y$. Therefore, 
putting our various observations together, we obtain that
\begin{align*}
\holimG_\mathcal{C} X & \cong 
\colim_{N \vartriangleleft_o G} \mathcal{E}^N 
\cong \colim_{N \vartriangleleft_o G}(\colim_{N' 
\vartriangleleft_o G}(\mathcal{E}')^{N'})^N 
\\ & \cong \colim_{N' \vartriangleleft_o G}(\mathcal{E}')^{N'} 
= \colim_{N \vartriangleleft_o G}(\holim_\mathcal{C} X)^N.
\end{align*}
\end{proof}

\par
If $X$ is a $\mathcal{C}$-diagram of fibrant discrete $G$-spectra (that is, 
$X(c)$ is fibrant in $\mathrm{Spt}_G$, for all $c \in \mathcal{C}$), then 
$\holim_\mathcal{C}^G X$ is a fibrant discrete $G$-spectrum, by 
\cite[Theorem 18.5.2 (2)]{Hirschhorn}, so that Theorem \ref{iso} gives the 
following result.

\begin{Cor}\label{maincor}
If $X$ is a $\mathcal{C}$-diagram of fibrant discrete $G$-spectra, then the 
spectrum 
$\colim_{N \vartriangleleft_o G} (\holim_\mathcal{C} X)^N$ is a fibrant 
discrete $G$-spectrum.
\end{Cor}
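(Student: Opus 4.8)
The plan is to obtain the corollary as an immediate consequence of Theorem \ref{iso} combined with the standard fact that the homotopy limit of a diagram of fibrant objects in a simplicial model category is again fibrant. Thus the argument has only two ingredients, and essentially all of the substantive work has already been carried out in Theorem \ref{iso}.

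First I would note that $\mathrm{Spt}_G$ is a simplicial model category (by \cite[Section 3]{cts}) and that $\mathcal{C}$ is small, so that \cite[Theorem 18.5.2(2)]{Hirschhorn} applies to the $\mathcal{C}$-diagram $X$. Since $X(c)$ is fibrant in $\mathrm{Spt}_G$ for every $c \in \mathcal{C}$, that theorem yields that $\holim_\mathcal{C}^G X$ is fibrant in $\mathrm{Spt}_G$; in particular it is a discrete $G$-spectrum whose underlying spectrum is fibrant. Next I would invoke Theorem \ref{iso}, which provides an isomorphism
\[\holimG_\mathcal{C} X \;\cong\; \colim_{N \vartriangleleft_o G}(\holim_\mathcal{C} X)^N\]
in $\mathrm{Spt}_G$. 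Since being fibrant is invariant under isomorphism in a model category, transporting the conclusion of the previous step across this isomorphism shows that $\colim_{N \vartriangleleft_o G}(\holim_\mathcal{C} X)^N$ is a fibrant discrete $G$-spectrum, which is exactly the assertion.

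I do not anticipate a real obstacle here: the two cited inputs dovetail directly, and the genuine content sits in Theorem \ref{iso}, which converts the a priori opaque object $\holim_\mathcal{C}^G X$ into the explicit colimit of $N$-fixed points of an ordinary, non-equivariant homotopy limit. In principle one could bypass Theorem \ref{iso} and instead attempt to verify the right lifting property against the trivial cofibrations of $\mathrm{Spt}_G$ for $\colim_{N \vartriangleleft_o G}(\holim_\mathcal{C} X)^N$ by hand, but that route is both harder and unnecessary, so I would not take it.
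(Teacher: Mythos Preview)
Your proposal is correct and matches the paper's own argument essentially verbatim: the paper derives the corollary by combining \cite[Theorem 18.5.2 (2)]{Hirschhorn} (to get that $\holim_\mathcal{C}^G X$ is fibrant in $\mathrm{Spt}_G$) with Theorem \ref{iso} (to identify that object with $\colim_{N \vartriangleleft_o G}(\holim_\mathcal{C} X)^N$), exactly as you do.
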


\par 
We conclude this section with some observations about Corollary \ref{maincor}.

\begin{Def}\label{presheaf}
If $P$ is a $\mathcal{C}$-diagram of 
presheaves of spectra on the site 
$G\negthinspace-\negthinspace\mathbf{Sets}_{df}$, then there is a presheaf of spectra 
$\holim_\mathcal{C} P,$ defined by 
\[(\holim_\mathcal{C} P)(S) = \holim_\mathcal{C} P(S),\] for each 
$S \in G\negthinspace-\negthinspace\mathbf{Sets}_{df}$.
\end{Def}

\par
Let $X$ be a $\mathcal{C}$-diagram in $\mathrm{Spt}_G$. Then it is natural 
to form the presheaf of spectra $\holim_\mathcal{C} \mathrm{Hom}_G(-,X)$. 
Also, let 
\[\mathcal{F} = 
\mathrm{Hom}_G(-, \colim_{N \vartriangleleft_o G} 
(\holim_\mathcal{C} X)^N),\] 
the canonical sheaf of spectra on the site 
$G\negthinspace-\negthinspace\mathbf{Sets}_{df}$ associated to the spectrum 
$\colim_{N \vartriangleleft_o G} (\holim_\mathcal{C} X)^N$ that is considered in 
Corollary \ref{maincor}.  
\par
The following lemma says that the presheaf 
$\holim_\mathcal{C} \mathrm{Hom}_G(-,X)$ is actually a sheaf of spectra, since it 
is isomorphic to $\mathcal{F}$. 

\begin{Lem}\label{isopresheaves}
If $X$ is a $\mathcal{C}$-diagram of discrete $G$-spectra, then the 
presheaves of spectra $\mathcal{F}$ and $\holim_\mathcal{C} \mathrm{Hom}_G(-,X)$ 
on the site $G\negthinspace-\negthinspace\mathbf{Sets}_{df}$ are 
isomorphic.
\end{Lem}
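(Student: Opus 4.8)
The plan is to construct, for each $S$ in the site $G\negthinspace-\negthinspace\mathbf{Sets}_{df}$, an isomorphism of spectra $\mathcal{F}(S)\cong\holim_\mathcal{C}\mathrm{Hom}_G(S,X)$ that is natural in $S$; since, by Definition \ref{presheaf}, $\holim_\mathcal{C}\mathrm{Hom}_G(S,X)$ is the value at $S$ of the presheaf $\holim_\mathcal{C}\mathrm{Hom}_G(-,X)$, this gives the lemma. By Theorem \ref{iso}, $\mathcal{F}(S)=\mathrm{Hom}_G(S,\colim_{N\vartriangleleft_o G}(\holim_\mathcal{C}X)^N)\cong\mathrm{Hom}_G(S,\holimG_\mathcal{C}X)$, the isomorphism being natural in $S$ since it is obtained by applying the contravariant functor $\mathrm{Hom}_G(-,Z)$ to the fixed isomorphism $\colim_{N\vartriangleleft_o G}(\holim_\mathcal{C}X)^N\cong\holimG_\mathcal{C}X$ of discrete $G$-spectra from Theorem \ref{iso}. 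So it suffices to show that the functor $\mathrm{Hom}_G(S,-)\colon\mathrm{Spt}_G\rightarrow\mathrm{Spt}$ carries $\holimG_\mathcal{C}X$ to $\holim_\mathcal{C}\mathrm{Hom}_G(S,X)$, naturally in $S$.

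First I would record two properties of $\mathrm{Hom}_G(S,-)$, for $S$ a finite discrete $G$-set. (i) It preserves all limits: writing $S$ as a finite disjoint union of orbits $G/U_i$ gives $\mathrm{Hom}_G(S,-)\cong\prod_i(-)^{U_i}$, each functor $(-)^{U_i}=\mathrm{Hom}_G(G/U_i,-)$ is a right adjoint (a left adjoint is $Z\mapsto G/U_i\times Z$, with trivial $G$-action on $Z$), and a finite product of limit-preserving functors preserves limits. (ii) It commutes with cotensors: for $Y\in\mathrm{Spt}_G$ and a simplicial set $K$ there is an isomorphism $\mathrm{Hom}_G(S,(Y^K)_G)\cong\mathrm{Hom}_G(S,Y)^K$, natural in $S$, where $(Y^K)_G$ and $Y^K$ denote the cotensors in $\mathrm{Spt}_G$ and $\mathrm{Spt}$. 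Indeed, since the cotensor $(Y^K)_G=\colim_{N\vartriangleleft_o G}(Y^K)^N$ (recalled in the proof of Theorem \ref{iso}) is the maximal sub-$G$-spectrum of $Y^K$ that is discrete and $S$ is a discrete $G$-set, every $G$-equivariant map $S\rightarrow Y^K$ lands in $(Y^K)_G$, so $\mathrm{Hom}_G(S,(Y^K)_G)=\mathrm{Hom}_G(S,Y^K)$; and $\mathrm{Hom}_G(S,Y^K)\cong\mathrm{Hom}_G(S,Y)^K$ because $G$ acts trivially on $K$ (for $S=G/U$ this is the equality $(Y^K)^U=(Y^U)^K$, and the general case follows because $(-)^K$ commutes with the finite product $\prod_i$). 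All identifications used are natural in $S$.

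Next, writing $B_c$ for $B(\mathcal{C}\negthinspace\downarrow\negthinspace c)$ as in the proof of Theorem \ref{iso}, recall from Definition \ref{holim} that $\holimG_\mathcal{C}X$ is the equalizer in $\mathrm{Spt}_G$ of the two maps $\alpha,\beta$ from $\prod_{c\in\mathcal{C}}(X(c)^{B_c})_G$ to $\prod_{\sigma\colon c\rightarrow d}(X(d)^{B_c})_G$, where the products and the cotensors $(-)_G$ are formed in $\mathrm{Spt}_G$. Applying the limit-preserving functor $\mathrm{Hom}_G(S,-)$, and then invoking (i) to pass $\mathrm{Hom}_G(S,-)$ through the products and (ii) to pass it through the cotensors, we obtain that $\mathrm{Hom}_G(S,\holimG_\mathcal{C}X)$ is the equalizer in $\mathrm{Spt}$ of two maps from $\prod_{c\in\mathcal{C}}\mathrm{Hom}_G(S,X(c))^{B_c}$ to $\prod_{\sigma\colon c\rightarrow d}\mathrm{Hom}_G(S,X(d))^{B_c}$. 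Moreover, the maps $\alpha$ and $\beta$ of Definition \ref{holim} are assembled from product projections and from maps induced functorially by the morphisms of $\mathcal{C}$ and by the simplicial maps $B(\mathcal{C}\negthinspace\downarrow\negthinspace c_0)\rightarrow B(\mathcal{C}\negthinspace\downarrow\negthinspace c_1)$, so $\mathrm{Hom}_G(S,-)$ sends them to the corresponding maps $\alpha,\beta$ for the $\mathcal{C}$-diagram $c\mapsto\mathrm{Hom}_G(S,X(c))$ in $\mathrm{Spt}$. Comparing once more with Definition \ref{holim}, this last equalizer is exactly $\holim_\mathcal{C}\mathrm{Hom}_G(S,X)$. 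Since every step above is natural in $S$, assembling these isomorphisms over all $S\in G\negthinspace-\negthinspace\mathbf{Sets}_{df}$ yields the desired isomorphism of presheaves of spectra $\mathcal{F}\cong\holim_\mathcal{C}\mathrm{Hom}_G(-,X)$.

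I expect the only points needing genuine care to be the identification of $(Y^K)_G$ with the maximal discrete sub-$G$-spectrum of $Y^K$ used in (ii) (immediate from the formula $(Y^K)_G=\colim_{N\vartriangleleft_o G}(Y^K)^N$ recalled in the proof of Theorem \ref{iso}, together with the observation that a $G$-equivariant map out of a discrete $G$-set lands in the discrete part of its target) and the bookkeeping confirming that $\mathrm{Hom}_G(S,-)$ carries the structure maps $\alpha,\beta$ of $\holimG_\mathcal{C}X$ to those of $\holim_\mathcal{C}\mathrm{Hom}_G(S,X)$; both are routine once Definition \ref{holim} is unwound.
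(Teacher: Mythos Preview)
Your argument is correct, but it takes a different route from the paper. The paper works directly with the $\mathrm{Spt}$-homotopy limit: decomposing $S\cong\coprod_{j=1}^m G/U_j$, it computes
\[
\mathcal{F}(S)\cong\textstyle\prod_j\bigl(\colim_{N\vartriangleleft_o G}(\holim_\mathcal{C}X)^N\bigr)^{U_j}\cong\textstyle\prod_j(\holim_\mathcal{C}X)^{U_j}\cong\holim_\mathcal{C}\mathrm{Hom}_G(S,X),
\]
using the identity $(\colim_{U<_oG}Y^U)^{U_j}\cong Y^{U_j}$ from the proof of Theorem~\ref{iso} for the middle step and Remark~\ref{commute} (that $\holim_\mathcal{C}$ commutes with limits) for the last. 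You instead invoke Theorem~\ref{iso} to rewrite $\mathcal{F}(S)$ as $\mathrm{Hom}_G(S,\holimG_\mathcal{C}X)$ and then push $\mathrm{Hom}_G(S,-)$ through the equalizer presentation of $\holimG$ by hand, verifying it preserves the relevant products and cotensors. Both arguments rest on the same underlying facts; the paper's is shorter because the limit-commutation is packaged into a single appeal to Remark~\ref{commute}, while yours is more explicit and avoids passing through the non-discrete $G$-spectrum $\holim_\mathcal{C}X$ by staying inside $\mathrm{Spt}_G$ until the very end.
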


\begin{proof}
Let $S$ be a finite discrete $G$-set: $S$ can be identified with a 
disjoint union $\coprod_{j=1}^m G/U_j$, where each $U_j$ is an open subgroup of 
$G$. Notice that the collection $\{N\}_{N \vartriangleleft_o G}$ of open normal 
subgroups of $G$ is a cofinal subcollection 
of the collection $\{U\}_{U <_o G}$ of open subgroups of $G$, so that, if $Y$ 
is a $G$-spectrum, there is an isomorphism 
$\textstyle\colim_{N \vartriangleleft_o G} Y^N \cong 
\textstyle\colim_{U <_o G} Y^U$ of $G$-spectra. Hence, 
\begin{align*}
\mathcal{F}(S)
& \cong \textstyle\prod_{j=1}^m 
(\displaystyle\colim_{N \vartriangleleft_o G} (\holim_\mathcal{C} 
X)^N)^{U_j} \\ & \cong \textstyle\prod_{j=1}^m 
(\displaystyle\colim_{U <_o G} (\holim_\mathcal{C} X)^U)^{U_j} \\ &
\cong \textstyle\prod_{j=1}^m (\displaystyle\holim_\mathcal{C} X)^{U_j} \\ & \cong 
\holim_\mathcal{C} \mathrm{Hom}_G(S, X),
\end{align*} 
where the third isomorphism is due to the fact that $U_j \in \{U \, | \, U<_o G\}$ 
(as in the proof of Theorem \ref{iso}) and the last isomorphism applies 
Remark \ref{commute}. This chain of isomorphisms shows that there is an 
isomorphism 
$\mathcal{F}(S) \cong \textstyle\holim_\mathcal{C} \mathrm{Hom}_G(S, X)$ that 
is natural for $S \in G\negthinspace-\negthinspace\mathbf{Sets}_{df}$, 
so that $\mathcal{F}$ and $\holim_\mathcal{C} \mathrm{Hom}_G(-,X)$ are 
isomorphic presheaves of spectra.
\end{proof}
 
\begin{Rk}
Let $X$ be a $\mathcal{C}$-diagram of fibrant discrete $G$-spectra. Then 
the assertion of Corollary 
\ref{maincor} that 
$\colim_{N \vartriangleleft_o G} (\holim_\mathcal{C} X)^N$ is a fibrant discrete 
$G$-spectrum is equivalent to claiming that $\mathcal{F}$ is a globally 
fibrant presheaf of spectra (see \cite[pg. 333]{cts}). Also, 
by Lemma \ref{isopresheaves}, to show that $\mathcal{F}$ is globally 
fibrant, it suffices to show that $\holim_\mathcal{C} \mathrm{Hom}_G(-,X)$ is a 
globally fibrant presheaf. This can be done by adapting 
\cite[Proposition 3.3]{Jardinejpaa} and \cite[Lemma 7.3]{cts}, since 
$\mathrm{Hom}_G(-,X)$ is a $\mathcal{C}$-diagram of globally fibrant presheaves 
of spectra. This gives a somewhat different way of obtaining Corollary \ref{maincor}.
\end{Rk}

\section{The explicit construction of a fibrant discrete $G$-spectrum}\label{model}

\par
In this section, we use Corollary \ref{maincor} to construct the fibrant object in 
$\mathrm{Spt}_G$ 
that is our primary object of interest. We begin with several definitions that are 
standard in the theory of discrete $G$-modules and discrete $G$-spectra.

\begin{Def}\label{map}
If $A$ is an abelian group with the discrete topology, let 
$\mathrm{Map}_c(G,A)$ be the abelian group of continuous maps from $G$ to $A$. 
If $Z$ is a spectrum, one can also define the discrete $G$-spectrum 
$\mathrm{Map}_c(G,Z),$ where 
the $l$-simplices $(\mathrm{Map}_c(G,Z)_k)_l$ of the $k$th simplicial set of the spectrum $\mathrm{Map}_c(G,Z)$ are 
given by $\mathrm{Map}_c(G,(Z_k)_l),$ the set of continuous maps from $G$ to 
$(Z_k)_l$. Here, $(Z_k)_l$ is given the discrete 
topology and the $G$-action on $\mathrm{Map}_c(G,Z)$ 
is induced on the level of sets 
by $(g \cdot f)(g') = f(g'g),$ for $g, g' \in G$ and 
$f \in \mathrm{Map}_c(G,(Z_k)_l),$ for each $k, l \geq 0.$ This action 
also makes $\mathrm{Map}_c(G,A)$ a discrete $G$-module.  

\end{Def}

\begin{Def}\label{triple}
Consider the functor 
\[\Gamma_G \colon \mathrm{Spt}_G \rightarrow \mathrm{Spt}_G, \ \ \ 
X \mapsto \Gamma_G(X) = \mathrm{Map}_c(G,X),\] where $\Gamma_G(X)$ has 
the $G$-action given by Definition \ref{map}. As explained 
in \cite[Definition 7.1]{cts}, the functor $\Gamma_G$ 
forms a triple and there is 
a cosimplicial discrete $G$-spectrum $\Gamma^\bullet_G X,$ 
where, for all $n \geq 0$, 
\[(\Gamma^\bullet_G X)^n \cong \mathrm{Map}_c(G^{n+1},X).\] Here, the spectrum 
$\mathrm{Map}_c(G^{n+1},X)$ is defined as in Definition \ref{map}, since 
the cartesian product $G^{n+1}$ is a profinite group, and its discrete $G$-action is given by the $G$-action on 
the constituent sets that is given by 
\[(g \cdot f)(g_1, g_2, g_3, ..., g_{n+1}) = f(g_1g, g_2, g_3, ..., g_{n+1}).\]
\end{Def}

\par
The next definition restates Definition \ref{triple} in the context of discrete 
$G$-modules. 

\begin{Def}
Let $\mathrm{DMod}(G)$ be the category of discrete $G$-modules. 
Then, as in Definition \ref{triple}, there is a functor 
\[\Gamma_G \colon \mathrm{DMod}(G) \rightarrow \mathrm{DMod}(G), \ \ \ 
M \mapsto \Gamma_G(M) = \mathrm{Map}_c(G,M),\] 
and, given a discrete $G$-module $M$, there
is a cosimplicial discrete $G$-module $\Gamma^\bullet_G M.$ 
\end{Def}

\begin{Def}[{\cite[Remark 7.5]{cts}}]
Given a discrete $G$-spectrum $X$, let 
\[\widehat{X} = \colim_{N \vartriangleleft_o G} (X^N)_\mathtt{f}.\] 
Notice that $\widehat{X}$ is a discrete $G$-spectrum, 
since functorial fibrant replacement in $\mathrm{Spt}$ (see the Introduction) 
implies that each 
$(X^N)_\mathtt{f}$ is a $G/N$-spectrum. Also, $\widehat{X}$ 
is fibrant as a spectrum and there is a weak equivalence 
\[\psi \: X \cong \colim_{N \vartriangleleft_o G} X^N \rightarrow 
\colim_{N \vartriangleleft_o G} (X^N)_\mathtt{f} = \widehat{X}\] 
that is $G$-equivariant. 
\end{Def}

\par
We define some useful terminology. If $X^\bullet$ is a cosimplicial object in $\mathrm{Spt}_G$, then $X^\bullet$ is a 
{\em cosimplicial discrete $G$-spectrum}. If $X^\bullet$ is a cosimplicial 
discrete $G$-spectrum such that $X^n$ is a fibrant discrete $G$-spectrum, 
for all $n \geq 0$, then $X^\bullet$ is a {\em cosimplicial fibrant discrete $G$-spectrum}.

\par
The following result defines the explicit globally fibrant object that is of particular 
interest to us. 

\begin{Thm}\label{fibrant}
Let $G$ be a profinite group, and let $H$ be a 
closed subgroup of $G$. If $X$ is a discrete 
$G$-spectrum, then the discrete $H$-spectrum 
\[\colim_{K \vartriangleleft_o H} (\holim_\Delta 
\Gamma^\bullet_G 
\widehat{X})^K\] is fibrant in the model category of discrete $H$-spectra. In particular, 
the discrete $G$-spectrum \[\colim_{N \vartriangleleft_o G} 
(\holim_\Delta \Gamma^\bullet_G \widehat{X})^N\] is fibrant in 
$\mathrm{Spt}_G$.
\end{Thm}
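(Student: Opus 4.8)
The plan is to deduce Theorem~\ref{fibrant} from Corollary~\ref{maincor}, applied to the profinite group $H$ with $\mathcal{C}=\Delta$ (a small category); the assertion for $G$ is then the special case $H=G$. Regarding a cosimplicial object as a $\Delta$-diagram, it suffices by Corollary~\ref{maincor} (with $H$ in place of $G$) to check that the restriction of the cosimplicial discrete $G$-spectrum $\Gamma^\bullet_G\widehat{X}$ along $H\hookrightarrow G$ is a cosimplicial \emph{fibrant} discrete $H$-spectrum; that is, that for each $n\geq 0$ the spectrum $\mathrm{Map}_c(G^{n+1},\widehat{X})\cong(\Gamma^\bullet_G\widehat{X})^n$, with the $G$-action of Definition~\ref{triple} restricted to $H$, is fibrant in $\mathrm{Spt}_H$. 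Here $\holim_\Delta$ denotes the homotopy limit formed in $\mathrm{Spt}$, exactly as in Corollary~\ref{maincor} and Theorem~\ref{iso}; it depends only on the underlying cosimplicial spectrum, and the $H$-action used to form $\colim_{K\vartriangleleft_o H}(-)^K$ is the residual one inherited from $\Gamma^\bullet_G\widehat{X}$.

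Two standard facts carry out this verification. First, $\widehat{X}$ is fibrant as a spectrum (as recorded above), and for a profinite set $T$ the functor $\mathrm{Map}_c(T,-)\colon\mathrm{Spt}\to\mathrm{Spt}$ preserves fibrant spectra: $\mathrm{Map}_c(T,\widehat{X})$ is a filtered colimit (over the finite quotients of $T$) of finite products of copies of $\widehat{X}$, and both finite products and filtered colimits preserve $\Omega$-spectra with Kan levels. Second, the forgetful functor $U_H\colon\mathrm{Spt}_H\to\mathrm{Spt}$ is left Quillen (weak equivalences and cofibrations in $\mathrm{Spt}_H$ are detected by $U_H$), so its right adjoint $\mathrm{Map}_c(H,-)\colon\mathrm{Spt}\to\mathrm{Spt}_H$ is right Quillen and carries fibrant spectra to fibrant discrete $H$-spectra. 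It therefore remains to produce an isomorphism of discrete $H$-spectra $\mathrm{Res}^G_H\,\mathrm{Map}_c(G^{n+1},\widehat{X})\cong\mathrm{Map}_c(H,W)$, in which $W$ is a fibrant spectrum and $H$ acts on $\mathrm{Map}_c(H,W)$ by the translation action of Definition~\ref{map}.

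For this, regard $G^{n+1}$ as a right $H$-space via $h\cdot(g_1,\ldots,g_{n+1})=(g_1h,g_2,\ldots,g_{n+1})$; the orbits are the sets $g_1H\times\{(g_2,\ldots,g_{n+1})\}$, so the orbit space is $G/H\times G^n$. Choosing a continuous section $s\colon G/H\to G$ of the quotient map $G\to G/H$ — such a section exists because $H$ is a closed subgroup of the profinite group $G$ — one obtains an $H$-equivariant homeomorphism $(G/H\times G^n)\times H\to G^{n+1}$, $((\overline{g_1},g_2,\ldots,g_{n+1}),h)\mapsto(s(\overline{g_1})h,g_2,\ldots,g_{n+1})$, where $H$ acts by right translation on the last factor. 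Currying gives $\mathrm{Res}^G_H\,\mathrm{Map}_c(G^{n+1},\widehat{X})\cong\mathrm{Map}_c(H,\mathrm{Map}_c(G/H\times G^n,\widehat{X}))$ with the action of Definition~\ref{map}, so one may take $W=\mathrm{Map}_c(G/H\times G^n,\widehat{X})$, which is fibrant by the first fact above. Corollary~\ref{maincor} then yields the theorem. (Alternatively, one may simply invoke the fact, used in the proof of \cite[Theorem~7.4]{cts}, that $\mathrm{Map}_c(G,Z)$ is fibrant in $\mathrm{Spt}_H$ whenever $Z$ is a fibrant spectrum and $H$ is closed in $G$.)

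I expect the only genuine obstacle to be the equivariance bookkeeping in the last step — keeping the left/right coset conventions straight so that the residual $H$-action on $\mathrm{Map}_c(H,W)$ comes out as the standard translation action rather than a twisted one — together with the (standard, but not quite trivial) existence of a continuous section of $G\to G/H$. The assertion for $G$ is essentially immediate, needing only the adjunction $U_G\dashv\mathrm{Map}_c(G,-)$ and Corollary~\ref{maincor}.
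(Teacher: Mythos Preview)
Your proposal is correct and follows essentially the same approach as the paper: reduce to Corollary~\ref{maincor}, use the $H$-equivariant homeomorphism $G\cong H\times G/H$ (the paper cites \cite[Proposition 1.3.4(c)]{Wilson} for this, which is equivalent to your continuous section), and conclude via the right Quillen property of $\mathrm{Map}_c(H,-)$. The only cosmetic difference is that the paper handles $(\Gamma^\bullet_G\widehat{X})^n$ by iterating the $n=0$ case (using that fibrant in $\mathrm{Spt}_H$ implies fibrant in $\mathrm{Spt}$), whereas you decompose $G^{n+1}\cong H\times(G/H\times G^n)$ in one step; both arguments are equally valid.
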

\begin{proof}
By Corollary \ref{maincor}, we only need to show that 
$\Gamma^\bullet_G \widehat{X}$ is a cosimplicial fibrant discrete $H$-spectrum. 
By \cite[Proposition 1.3.4 (c)]{Wilson}, there is a homeomorphism 
$h \: H \times G/H \rightarrow G$ that is $H$-equivariant, where $H$ acts on the 
source by acting only on the factor $H$ and $G/H$ is the 
profinite space $G/H \cong \lim_{N \vartriangleleft_o G} G/NH.$

\par
Given a spectrum $Z$ and a profinite space $W = \lim_\alpha W_\alpha$, where 
each $W_\alpha$ is a finite discrete space, as in Definition \ref{map}, we can 
form the spectrum $\mathrm{Map}_c(W,Z)$, where 
$(\mathrm{Map}_c(W,Z)_k)_l = \mathrm{Map}_c(W,(Z_k)_l)$, and there is an 
isomorphism \[\mathrm{Map}_c(W,Z) \cong 
\colim_\alpha \textstyle\prod_{w \in W_\alpha} Z.\]
Thus, 
\begin{equation}\zig\label{one}
\mathrm{Map}_c(G/H,\widehat{X}) \cong 
\colim_{N \vartriangleleft_o G} \textstyle\prod_{G/NH} \widehat{X}.
\end{equation}
Since filtered colimits commute with finite limits,
\[\textstyle\prod_{G/NH} \widehat{X} \cong 
\textstyle\prod_{G/NH} \displaystyle\colim_{N' \vartriangleleft_o G} (\widehat{X})^{N'} 
\cong \colim_{N' \vartriangleleft_o G} 
(\textstyle\prod_{G/NH} \widehat{X})^{N'},\] and 
it follows that 
$\mathrm{Map}_c(G/H,\widehat{X})$ is a discrete $G$-spectrum, with $G$ acting 
only on $\widehat{X}$. Therefore, by applying the homeomorphism $h$, 
there is an isomorphism
\begin{equation}\zig\label{two}
\mathrm{Map}_c(G, \widehat{X}) \cong 
\mathrm{Map}_c(H, \mathrm{Map}_c(G/H,\widehat{X}))
\end{equation} of discrete $H$-spectra. 

\par
Recall from \cite[Corollary 3.8]{cts} that if $Z$ is a fibrant spectrum, then 
$\mathrm{Map}_c(H,Z)$ is a fibrant discrete $H$-spectrum. Also, since 
$\widehat{X}$ is a fibrant spectrum, the product $\prod_{G/NH} \widehat{X}$ is 
also a fibrant spectrum, so that, by (\ref{one}), $\mathrm{Map}_c(G/H, \widehat{X})$ 
is fibrant in $\mathrm{Spt}$. Then, by applying these observations to (\ref{two}), we 
obtain that $\mathrm{Map}_c(G, \widehat{X})$  is a fibrant discrete $H$-spectrum. 
Hence, $\mathrm{Map}_c(G, \widehat{X})$ is a fibrant spectrum, by 
\cite[Lemma 3.10]{cts}, so that $\mathrm{Map}_c(G, \mathrm{Map}_c(G, \widehat{X}))$ 
is a fibrant discrete $H$-spectrum, by applying the previous argument again. Thus, 
iteration of this argument shows that 
$\Gamma^\bullet_G \widehat{X}$ is a cosimplicial fibrant discrete $H$-spectrum.
\end{proof}

\section{Completing the proof of the main result}

\par
In this section, we finish the proof of the main result. Also, we discuss several 
consequences of having a concrete model for $X_{f,G}$, given a discrete $G$-spectrum 
$X$.

\begin{Def}\label{vcd}
Let $H_c^*(G;M)$ denote 
the continuous cohomology of $G$ with coefficients in the discrete $G$-module $M$. 
Then a profinite group $G$ has {\em finite virtual cohomological dimension} (or 
{\em finite vcd}) if 
there exists an open 
subgroup $H$ of $G$ 
and a non-negative integer $m$, such that $H^s_c(H;M)=0$, for all 
discrete $H$-modules $M$ and all $s \geq m$. 
\end{Def}

\par
Many of the profinite groups that one works with, in practice, 
have finite vcd. For example, if $G$ is a compact $p$-adic analytic group, 
$G$ has finite vcd (see the 
discussion in \cite[pg. 330]{cts}).

\par
Let $X$ be a discrete $G$-spectrum. Then there is a $G$-equivariant 
monomorphism $i \: X \rightarrow \mathrm{Map}_c(G,X)$ that is defined, on 
the level of sets, by $i(x)(g) = g \cdot x$. Then $i$ induces a map $X \rightarrow 
\Gamma^\bullet_G X$ of cosimplicial discrete $G$-spectra, where, here, $X$ is the 
constant diagram. Thus, the composition 
\[X \overset{\psi}{\rightarrow} \widehat{X} \overset{\cong}{\rightarrow} 
\lim_\Delta \widehat{X} 
\rightarrow \holim_\Delta \widehat{X} 
\rightarrow \holim_\Delta \Gamma^\bullet_G \widehat{X}\] of canonical maps 
defines the $G$-equivariant map 
\[\widehat{\psi} \: X \rightarrow \holim_\Delta \Gamma^\bullet_G \widehat{X}\] 
(the canonical map $\lim_\Delta \widehat{X} 
\rightarrow \holim_\Delta \widehat{X}$ is defined in 
\cite[Example 18.3.8 (2)]{Hirschhorn}).

\par
Note that there is a homotopy spectral sequence 
\[E_2^{s,t} = \pi^s(\pi_t(\Gamma^\bullet_G \widehat{X})) \Rightarrow 
\pi_{t-s}(\holim_\Delta 
\Gamma^\bullet_G \widehat{X}),\] where $E_2^{0,t} \cong \pi_t(X)$ and 
$E_2^{s,t} = 0$, when $s > 0$, by \cite[Section 7]{cts}. Thus, the spectral sequence 
collapses, so that the map $\widehat{\psi}$ is a weak equivalence.

\par
Now let $H$ be a closed subgroup of $G$. Then $X$ is a discrete $H$-spectrum, 
so that $X \cong \colim_{K \vartriangleleft_o H} X^K$. Composing this 
isomorphism with the map $\colim_{K \vartriangleleft_o H} (\widehat{\psi})^K$ gives the 
$H$-equivariant map \[\Psi \: X \rightarrow \colim_{K \vartriangleleft_o H} 
(\holim_\Delta 
\Gamma^\bullet_G \widehat{X})^K.\]

\par
Now we show that if $G$ has finite vcd, then $\Psi$ is a weak equivalence. 
As mentioned in the Introduction, the proof (below) closely follows the proof of 
\cite[Theorem 7.4]{cts}, so that our proof will be somewhat 
abbreviated. Also, we should 
mention that the proof of \cite[Theorem 7.4]{cts} follows the arguments given in 
\cite[proof of Proposition 3.3]{Jardinejpaa} and \cite[Proposition 3.20]{Mitchell}.

\begin{Thm}\label{main}
Let $G$ have finite vcd, let $X$ be a discrete $G$-spectrum, and let $H$ be a closed subgroup of $G$. 
Then the map
\[\Psi \: X \rightarrow 
\colim_{K \vartriangleleft_o H} (\holim_\Delta \Gamma^\bullet_G \widehat{X})^K\] is a weak equivalence 
in the category of discrete $H$-spectra, such that the target is 
a fibrant discrete $H$-spectrum.
\end{Thm}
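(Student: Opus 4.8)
The plan is to establish the two assertions of Theorem \ref{main} separately, since the fibrancy of the target has already been done. Indeed, the target $\colim_{K \vartriangleleft_o H} (\holim_\Delta \Gamma^\bullet_G \widehat{X})^K$ is precisely the discrete $H$-spectrum shown to be fibrant in Theorem \ref{fibrant}, so that part requires only a pointer back to that result. What remains is to show that $\Psi$ is a weak equivalence in $\mathrm{Spt}_H$, i.e.\ a weak equivalence of underlying spectra.

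First I would recall that weak equivalences in $\mathrm{Spt}_H$ are detected on underlying spectra, and that by construction $\Psi = \colim_{K \vartriangleleft_o H} (\widehat{\psi})^K$ composed with the isomorphism $X \cong \colim_{K \vartriangleleft_o H} X^K$. So it suffices to show that $\colim_{K \vartriangleleft_o H}(\widehat\psi)^K$ is a weak equivalence of spectra. Since homotopy groups commute with filtered colimits, this reduces to showing that each map $(\widehat\psi)^K \colon X^K \to (\holim_\Delta \Gamma^\bullet_G \widehat X)^K$ is a weak equivalence; equivalently, applying Theorem \ref{iso} (or rather Lemma \ref{isopresheaves} together with the sheaf-theoretic identification), that $\widehat\psi$ itself is a weak equivalence of discrete $G$-spectra after passing to the relevant fixed points. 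In fact the cleanest route is: the map $\widehat\psi \colon X \to \holim_\Delta \Gamma^\bullet_G \widehat X$ of discrete $G$-spectra was already observed (just before the statement) to be a weak equivalence, via the collapsing homotopy spectral sequence $E_2^{s,t} = \pi^s(\pi_t(\Gamma^\bullet_G \widehat X)) \Rightarrow \pi_{t-s}(\holim_\Delta \Gamma^\bullet_G \widehat X)$ with $E_2^{0,t} \cong \pi_t(X)$ and $E_2^{s,t} = 0$ for $s>0$. The only genuinely new content is the passage from $G$ to the closed subgroup $H$ and the interchange of $(-)^K$ with the homotopy limit.

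So the technical heart of the argument, following the proof of \cite[Theorem 7.4]{cts}, is to show that for each open normal $K \vartriangleleft_o H$ the functor $(-)^K$ applied to $\holim_\Delta \Gamma^\bullet_G \widehat X$ computes what it should, so that one still gets a collapsing spectral sequence after taking $K$-fixed points and then the colimit over $K$. Here is where finite vcd enters: by Definition \ref{vcd}, $G$ has an open subgroup with bounded-above cohomological dimension, hence so does $H$, and this is exactly the finiteness condition (as in \cite[proof of Proposition 3.3]{Jardinejpaa} and \cite[Proposition 3.20]{Mitchell}) that lets one commute the relevant colimit with the homotopy limit over $\Delta$ up to weak equivalence — the obstruction being a $\limone$-type term or a failure of the colimit to be exact on higher homotopy, which vanishes because the cohomological dimension bound makes the tower eventually constant (Postnikov-theoretically, only finitely many stages of the homotopy limit tower contribute in each degree). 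Concretely, one writes $\holim_\Delta \Gamma^\bullet_G \widehat X$ as the limit of its partial totalizations $\mathrm{Tot}_n$, notes the vcd hypothesis bounds the range in which $\pi_*$ can change, and deduces that $(\colim_{K} (\mathrm{Tot}_n \Gamma^\bullet_G \widehat X)^K)$ stabilizes and agrees with $\colim_K (\holim_\Delta \Gamma^\bullet_G \widehat X)^K$ on homotopy.

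The main obstacle, then, is precisely this interchange step: justifying that taking $K$-fixed points and then the filtered colimit over $K \vartriangleleft_o H$ does not disturb the homotopy type of the $\Delta$-homotopy limit, and that this is where the finite vcd hypothesis is used in an essential way. I expect to handle it by citing and adapting the spectral-sequence bookkeeping of \cite[Theorem 7.4]{cts}: one reindexes the descent/homotopy-fixed-point spectral sequence for $H$ built from $\Gamma^\bullet_G \widehat X$, observes that its $E_2$-page in total degree $t-s$ involves only continuous cohomology of (an open subgroup of) $H$ in degrees $\le$ the vcd bound, concludes strong convergence and the identification of the abutment with $\pi_*(X)$, and reads off that $\Psi$ induces an isomorphism on all homotopy groups. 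Since the paper explicitly says the proof "closely follows" that of \cite[Theorem 7.4]{cts} and will be "somewhat abbreviated," I would keep this step at the level of indicating the needed modifications rather than reproducing the entire spectral sequence analysis.
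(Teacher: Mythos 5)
Your skeleton matches the paper's (fibrancy of the target is exactly Theorem \ref{fibrant}; the weak equivalence is to come from a colimit-of-spectral-sequences argument in the style of \cite[Theorem 7.4]{cts}, with finite vcd providing a uniform bound), but your reduction step contains a genuine error. You claim that, since homotopy groups commute with filtered colimits, it suffices to show that each map $(\widehat\psi)^K \colon X^K \to (\holim_\Delta \Gamma^\bullet_G \widehat X)^K$ is a weak equivalence. Those individual maps are in general \emph{not} weak equivalences: since fixed points commute with the homotopy limit (Remark \ref{commute}), $(\holim_\Delta \Gamma^\bullet_G \widehat X)^K \cong \holim_\Delta (\Gamma^\bullet_G \widehat X)^K$, whose homotopy is computed by the descent spectral sequence with $E_2^{s,t} \cong H^s_c(K;\pi_t(X))$ (this is \cite[Lemma 7.12]{cts}); this $E_2$-term is not concentrated in $s=0$ for a fixed open normal $K \vartriangleleft_o H$, so the target computes $X^{hK}$ rather than $X^K$ (take $G=H=K=\mathbb{Z}_p$ acting trivially for a counterexample). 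The fact that $\widehat\psi$ is a weak equivalence of underlying spectra does not transfer, because $(-)^K$ does not preserve weak equivalences --- that is the whole point of homotopy fixed points. The weak equivalence asserted in the theorem emerges only after the colimit over $K$, so filtered-colimit exactness of $\pi_*$ cannot be used to reduce to a single $K$.

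The second gap is that your sketch omits the two ingredients that actually make the colimit argument work, and misattributes the collapse to the vcd bound. In the paper one chooses a cofinal family $\{U\}$ of open normal subgroups of $H$ with $H^s_c(U;M)=0$ for all $s\geq m$ uniformly in $U$ (possible because $H$, being closed in $G$, inherits finite vcd), identifies $E_2^{s,t}(U)=\pi^s\pi_t((\Gamma^\bullet_G\widehat X)^U)\cong H^s_c(U;\pi_t(X))$ for each such $U$, and then uses the uniform bound only to invoke \cite[Proposition 3.3]{Mitchell}: the colimit over $U$ of these conditionally convergent spectral sequences is again a spectral sequence converging to $\pi_*(\colim_U \holim_\Delta (\Gamma^\bullet_G\widehat X)^U)$. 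The collapse itself comes from continuity of continuous cohomology, $\colim_U H^s_c(U;\pi_t(X)) \cong H^s_c(\lim_U U;\pi_t(X)) = H^s_c(\{e\};\pi_t(X))$, which is $\pi_t(X)$ concentrated in $s=0$; this is what identifies the abutment with $\pi_*(X)$ and shows $\widehat\Psi$, hence $\Psi$, is a weak equivalence. Your Tot-tower/``eventually constant'' paragraph plays the role of the uniform bound, but your assertion that the resulting spectral sequence for (an open subgroup of) $H$ has abutment $\pi_*(X)$ is exactly the unproved point: for any fixed subgroup it is false, and the identification requires passing to the colimit of the $E_2$-terms as above. With the reduction corrected and these two steps supplied, the argument is the paper's.
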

\begin{proof}
Because of the earlier Theorem \ref{fibrant}, we 
only have to prove that $\Psi$ is a weak equivalence of spectra. 
\par
Since $H$ is closed in $G$, $H$ also has finite vcd. Hence, $H$ has a collection 
$\{U\}$ of open normal subgroups such that (a) $\{U\}$ is a cofinal subcollection of 
$\{K\}_{K \vartriangleleft_o H}$ (so, for example, $H \cong \lim_{U} H/{U}$) 
and (b) for all $U$, $H^s_c(U; M) = 0,$ for all
$s \geq m$, where $m$ is some natural number that is independent of $U$, 
and for all discrete $U$-modules $M$. Thus, 
\begin{equation}\zig\label{isomorphism}
\colim_{K \vartriangleleft_o H} 
(\holim_\Delta \Gamma^\bullet_G \widehat{X})^K 
\cong \colim_{U} (\holim_\Delta \Gamma^\bullet_G \widehat{X})^U,
\end{equation}
so that, to show that $\Psi$ is a weak equivalence, it suffices to show that 
the map 
\[\widehat{\Psi} \: X \rightarrow 
\colim_{U} (\holim_\Delta \Gamma^\bullet_G \widehat{X})^U 
\cong \colim_{U} 
\holim_\Delta (\Gamma^\bullet_G \widehat{X})^U,\] 
induced by $\Psi$ and (\ref{isomorphism}), is a weak equivalence.
\par
Notice that each $U$ is a closed subgroup of $G$. Then, 
for each $U$, 
$(\Gamma_G^\bullet \widehat{X})^U$ is a cosimplicial 
fibrant
spectrum, so that there is a conditionally
convergent homotopy spectral
sequence 
\begin{equation}\zig\label{specseq} 
E_2^{s,t}(U) = \pi^s \pi_t ((\Gamma_G^\bullet \widehat{X})^{U})
\Rightarrow \pi_{t-s}(\holim_\Delta 
(\Gamma_G^\bullet \widehat{X})^{U}),
\end{equation} 
with 
\[E_2^{s,t}(U) \cong H^s_c(U; \pi_t(X))\] 
(these assertions are verified in the proof of \cite[Lemma 7.12]{cts}).
\par
Since
$E_2^{s,\ast}(U) = 0$ whenever $s \geq m$, the $E_2$-terms
$E_2^{\ast, \ast}(U)$ are uniformly
bounded on the right. Therefore, by \cite[Proposition 3.3]{Mitchell}, 
taking a colimit over $\{U \}$ of
the spectral sequences in (\ref{specseq}) gives the spectral sequence
\begin{equation}\zig\label{specseq2} 
E_2^{s,t} = \colim_{U} H^s_c(U; \pi_t(X))
\Rightarrow \pi_{t-s}(\colim_{U} \holim_\Delta 
(\Gamma^\bullet_G \widehat{X})^{U}).
\end{equation}  
Notice that \[E_2^{*,t} \cong H^\ast_c(\lim_U U; 
\pi_t(X)) \negthinspace \cong \negthinspace H^\ast_c(\{e\};
\pi_t(X)),\] which is isomorphic to $\pi_t(X)$, concentrated 
in
degree zero.
Thus,
spectral sequence (\ref{specseq2}) collapses, so that, for all $t$, 
$\pi_{t}(\colim_U \holim_\Delta (\Gamma_G^\bullet \widehat{X})^{U}) \cong
\pi_t(X),$ and, hence, $\widehat{\Psi}$ is a weak equivalence.
\end{proof}

\par
Let $X$ be a $\mathcal{C}$-diagram of discrete $G$-spectra, where $\mathcal{C}$ 
is a small category. Then, by Theorem \ref{iso}, there is a canonical map 
\[\phi(X,G) \: \holimG_\mathcal{C} X \cong 
\colim_{N \vartriangleleft_o G} (\holim_\mathcal{C} X)^N  
\rightarrow \holim_\mathcal{C} X\] 
that is $G$-equivariant.

\begin{Cor}
Let $G$ have finite vcd, let $X$ be a discrete $G$-spectrum, and let $H$ be a closed subgroup of $G$. Then the $H$-equivariant map 
\[\phi(\Gamma_G^\bullet \widehat{X}, H) \: 
\holimH_\Delta \Gamma^\bullet_G \widehat{X} \rightarrow 
\holim_\Delta \Gamma^\bullet_G \widehat{X}\] is a weak equivalence in $\mathrm{Spt}$.
\end{Cor}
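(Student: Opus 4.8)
The plan is to deduce this from the two‑out‑of‑three property in $\mathrm{Spt}$, using the two weak equivalences that have already been produced along the way. First observe that, by the very definition given just before the statement, $\phi(\Gamma_G^\bullet \widehat{X}, H)$ is the composite of the isomorphism
\[\holimH_\Delta \Gamma^\bullet_G \widehat{X} \;\cong\; \colim_{K \vartriangleleft_o H}(\holim_\Delta \Gamma^\bullet_G \widehat{X})^K\]
of Theorem \ref{iso} with the canonical map
\[\iota \: \colim_{K \vartriangleleft_o H}(\holim_\Delta \Gamma^\bullet_G \widehat{X})^K \rightarrow \holim_\Delta \Gamma^\bullet_G \widehat{X}\]
assembled from the fixed‑point inclusions $(\holim_\Delta \Gamma^\bullet_G \widehat{X})^K \hookrightarrow \holim_\Delta \Gamma^\bullet_G \widehat{X}$. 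Since the first map is an isomorphism, it suffices to prove that $\iota$ is a weak equivalence of spectra.

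The key step is to check that $\iota \circ \Psi = \widehat{\psi}$, where $\Psi$ and $\widehat{\psi}$ are the maps from Theorem \ref{main} and the discussion preceding it. Recall that $\Psi$ is the composite of the canonical isomorphism $X \cong \colim_{K \vartriangleleft_o H} X^K$ (which expresses that $X$ is a discrete $H$‑spectrum, with colimit structure maps the inclusions $X^K \hookrightarrow X$) with $\colim_{K \vartriangleleft_o H}(\widehat{\psi})^K$. For each $K \vartriangleleft_o H$, the map $(\widehat{\psi})^K$ is obtained by restricting the $H$‑equivariant map $\widehat{\psi} \: X \rightarrow \holim_\Delta \Gamma^\bullet_G \widehat{X}$ to $K$‑fixed points, so composing $(\widehat{\psi})^K$ with the structure map $(\holim_\Delta \Gamma^\bullet_G \widehat{X})^K \hookrightarrow \holim_\Delta \Gamma^\bullet_G \widehat{X}$ gives $\widehat{\psi}$ precomposed with $X^K \hookrightarrow X$. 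Passing to the colimit over $K$ and then precomposing with $X \cong \colim_{K \vartriangleleft_o H} X^K$ collapses these inclusions and leaves exactly $\widehat{\psi}$; this is the desired identity.

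Finally, $\Psi$ is a weak equivalence of spectra by Theorem \ref{main} (a weak equivalence of discrete $H$‑spectra is one of underlying spectra), and $\widehat{\psi}$ is a weak equivalence of spectra because its homotopy spectral sequence collapses, as established before Theorem \ref{main}. Since $\iota \circ \Psi = \widehat{\psi}$, the two‑out‑of‑three property forces $\iota$ to be a weak equivalence, hence so is $\phi(\Gamma_G^\bullet \widehat{X}, H)$, which differs from $\iota$ only by an isomorphism. I do not expect a serious obstacle here: the only thing requiring care is keeping the identification of Theorem \ref{iso} straight and verifying commutativity of the triangle $X \rightarrow \colim_{K \vartriangleleft_o H}(\holim_\Delta \Gamma^\bullet_G \widehat{X})^K \rightarrow \holim_\Delta \Gamma^\bullet_G \widehat{X}$, which becomes automatic once the definition of $\Psi$ is unwound.
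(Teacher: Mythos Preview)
Your proposal is correct and follows essentially the same route as the paper: the paper's one-line proof observes $\widehat{\psi} = \phi(\Gamma_G^\bullet \widehat{X}, H) \circ \Psi$ and invokes two-out-of-three, which is exactly your argument after identifying $\phi$ with $\iota$ via the isomorphism of Theorem~\ref{iso}. Your version simply spells out the commutativity of the triangle in more detail than the paper does.
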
 

\begin{proof} 
Notice that $\widehat{\psi} = \phi(\Gamma_G^\bullet \widehat{X}, H) \circ \Psi$. 
Then the desired conclusion follows from the fact that $\widehat{\psi}$ and $\Psi$ are 
weak equivalences, where the latter fact is from Theorem \ref{main}. 
\end{proof} 

\par
In the Introduction, we pointed out that 
a weak equivalence $r^G_H \: X_{f,G} \rightarrow 
X_{f,H}$ in $\mathrm{Spt}_H$ always exists. The following result uses Theorem \ref{main} to give a concrete 
model for $r^G_H$.

\begin{Cor}\label{mapGH}
Let $G$ have finite vcd, let $X$ be a discrete $G$-spectrum, and let $H$ be a 
closed subgroup of $G$. Then there is a weak equivalence 
\[r^G_H \: 
\colim_{N \vartriangleleft_o G} (\holim_\Delta \Gamma_G^\bullet \widehat{X})^N 
\rightarrow \colim_{K \vartriangleleft_o H} 
(\holim_\Delta \Gamma_G^\bullet \widehat{X})^K\]
in $\mathrm{Spt}_H$, where the source of this map is a fibrant discrete 
$G$-spectrum and the target is a fibrant discrete $H$-spectrum.
\end{Cor}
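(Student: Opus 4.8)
The plan is to exhibit $r^G_H$ as the canonical map between the two colimits appearing in the statement and then to read off from Theorem~\ref{main} and the two-out-of-three property that it is a weak equivalence. Write $Y = \holim_\Delta \Gamma_G^\bullet \widehat{X}$, a spectrum carrying a (not necessarily discrete) $G$-action, so that by Theorem~\ref{iso} the source of the asserted map is $\colim_{N \vartriangleleft_o G} Y^N \cong \holimG_\Delta \Gamma_G^\bullet \widehat{X}$ and the target is $\colim_{K \vartriangleleft_o H} Y^K \cong \holimH_\Delta \Gamma_G^\bullet \widehat{X}$, the latter formed with the $H$-action obtained by restriction. Since $N \cap H \vartriangleleft_o H$ and $Y^N \subseteq Y^{N \cap H}$ whenever $N \vartriangleleft_o G$, there is a canonical $H$-equivariant map
\[r^G_H \: \colim_{N \vartriangleleft_o G} Y^N \longrightarrow \colim_{K \vartriangleleft_o H} Y^K;\]
concretely, each side is a sub-spectrum of $Y$ --- the maximal discrete sub-$G$-spectrum and the maximal discrete sub-$H$-spectrum, respectively --- and $r^G_H$ is the inclusion of the former into the latter. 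By Theorem~\ref{fibrant}, the source of $r^G_H$ is a fibrant discrete $G$-spectrum and the target is a fibrant discrete $H$-spectrum (the source being a discrete $H$-spectrum by restriction), so $r^G_H$ is a morphism of discrete $H$-spectra between the two fibrant objects named in the statement.

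It remains to see that $r^G_H$ is a weak equivalence of spectra. For this I would use the identity $r^G_H \circ \Psi_G = \Psi$, where $\Psi$ is the map of Theorem~\ref{main} and $\Psi_G \: X \to \colim_{N \vartriangleleft_o G} Y^N$ is the analogous map obtained by applying the construction preceding Theorem~\ref{main} to the closed subgroup $G$ of $G$; both $r^G_H \circ \Psi_G$ and $\Psi$ send $x \in X$ to $\widehat{\psi}(x)$ regarded as an element of the maximal discrete sub-$H$-spectrum $\colim_{K \vartriangleleft_o H} Y^K$ of $Y$, and alternatively the identity follows from the factorizations $\widehat{\psi} = \phi(\Gamma_G^\bullet \widehat{X},G) \circ \Psi_G = \phi(\Gamma_G^\bullet \widehat{X},H) \circ \Psi$ upon observing that $\phi(\Gamma_G^\bullet \widehat{X},G) = \phi(\Gamma_G^\bullet \widehat{X},H) \circ r^G_H$ and that $\phi(\Gamma_G^\bullet \widehat{X},H)$ is a monomorphism. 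Now Theorem~\ref{main} applied to the closed subgroup $G$ of $G$ shows that $\Psi_G$ is a weak equivalence in $\mathrm{Spt}_G$, hence of underlying spectra, and Theorem~\ref{main} applied to $H$ shows that $\Psi$ is a weak equivalence in $\mathrm{Spt}_H$, hence of underlying spectra; thus $r^G_H$ is a weak equivalence of spectra by two-out-of-three, and so a weak equivalence in $\mathrm{Spt}_H$, weak equivalences there being detected in $\mathrm{Spt}$. Since the source of $r^G_H$ is then a globally fibrant model for $X$ in $\mathrm{Spt}_G$ and its target is one in $\mathrm{Spt}_H$, the map $r^G_H$ is of the type labelled $r^G_H$ in the Introduction.

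I do not anticipate a genuine obstacle: the argument is essentially bookkeeping on top of Theorems~\ref{iso}, \ref{fibrant}, and \ref{main}. The one point that needs a little care is verifying that the natural map $\colim_{N \vartriangleleft_o G} Y^N \to \colim_{K \vartriangleleft_o H} Y^K$ is well defined and $H$-equivariant and that the triangle with $\Psi_G$ and $\Psi$ commutes on the nose. If one wished to avoid the explicit map, the same conclusion follows from the abstract argument recalled in the Introduction --- factor $\Psi_G$ through a trivial cofibration, extend $\Psi$ over it into the $\mathrm{Spt}_H$-fibrant target, and invoke two-out-of-three --- but the explicit description is the one worth recording, since the point of the paper is to have a concrete model.
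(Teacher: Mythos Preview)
Your proposal is correct and follows essentially the same approach as the paper's own proof: you construct $r^G_H$ via the inclusions $Y^N \hookrightarrow Y^{N\cap H}$ (exactly as the paper does), verify $H$-equivariance, and then deduce that $r^G_H$ is a weak equivalence from the factorization $\Psi = r^G_H \circ \Psi_G$ together with two applications of Theorem~\ref{main} and two-out-of-three. The paper's argument is the same, just stated more tersely and without the additional commentary you provide about $\phi(\Gamma_G^\bullet \widehat{X},-)$ and the abstract alternative.
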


\begin{proof}
Let $N$ be an open normal subgroup of $G$. Then $N \cap H$ is an open 
normal subgroup of $H$ and, hence, there is a canonical map 
\[(\holim_\Delta \Gamma_G^\bullet \widehat{X})^N \hookrightarrow 
(\holim_\Delta \Gamma_G^\bullet \widehat{X})^{N \cap H} 
\rightarrow \colim_{K \vartriangleleft_o H} 
(\holim_\Delta \Gamma_G^\bullet \widehat{X})^K.\] These maps, as $N$ varies, 
induce the desired map, which is easily seen to be $H$-equivariant. In 
$\mathrm{Spt}_H$, the weak 
equivalence $X \rightarrow \colim_{K \vartriangleleft_o H} 
(\holim_\Delta \Gamma_G^\bullet \widehat{X})^K$ is the composition of 
the weak equivalence $X \rightarrow \colim_{N \vartriangleleft_o G} 
(\holim_\Delta \Gamma_G^\bullet \widehat{X})^N$ and $r^G_H$, so that 
$r^G_H$ is a weak equivalence. 
\end{proof}

\par
The following result is 
a special case of the fact that, if $H$ is open in $G$, then a fibrant discrete $G$-spectrum is also fibrant 
as a discrete $H$-spectrum (see \cite[Lemma 3.1]{iterated} and 
\cite[Remark 6.26]{Jardine}).

\begin{Cor}\label{fibrancy}
Let $G$ have finite vcd and let $X$ be a discrete $G$-spectrum. 
If $H$ is an open subgroup of $G$, then 
$\colim_{N \vartriangleleft_o G} 
(\holim_\Delta (\Gamma^\bullet_G 
\widehat{X}))^N$, a fibrant discrete $G$-spectrum, 
is also a fibrant discrete $H$-spectrum.
\end{Cor}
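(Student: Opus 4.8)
The plan is to reduce the assertion to Theorem~\ref{fibrant} by a cofinality argument, so that the general principle quoted just before the corollary is not actually needed. Since an open subgroup of $G$ is in particular a closed subgroup, Theorem~\ref{fibrant} already tells us that the discrete $H$-spectrum
\[
\colim_{K \vartriangleleft_o H} (\holim_\Delta \Gamma^\bullet_G \widehat{X})^K
\]
is fibrant in $\mathrm{Spt}_H$. So it suffices to exhibit an isomorphism of discrete $H$-spectra between this spectrum and $\colim_{N \vartriangleleft_o G} (\holim_\Delta \Gamma^\bullet_G \widehat{X})^N$, the $H$-action on the latter being the restriction of the $G$-action.

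Write $W = \holim_\Delta \Gamma^\bullet_G \widehat{X}$, a spectrum carrying a (not necessarily discrete) $G$-action, so that $\colim_{N \vartriangleleft_o G} W^N = \bigcup_{N \vartriangleleft_o G} W^N$ and $\colim_{K \vartriangleleft_o H} W^K = \bigcup_{K \vartriangleleft_o H} W^K$ inside $W$. The key point is that, because $H$ is open in $G$, the collection $\{N \vartriangleleft_o G : N \subseteq H\}$ is cofinal both in $\{N \vartriangleleft_o G\}$ and in $\{K \vartriangleleft_o H\}$. This rests on the standard fact that for an open subgroup $K$ of $G$ the intersection $\bigcap_{g \in G} gKg^{-1}$ is again an open normal subgroup of $G$: it is an intersection of finitely many finite-index (hence also closed) subgroups, so it is a closed subgroup of finite index, hence open. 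Applying this with $K = N_0 \cap \bigl(\bigcap_{g \in G} gHg^{-1}\bigr)$ for a given $N_0 \vartriangleleft_o G$ gives the first cofinality, and applying it to an arbitrary $K \vartriangleleft_o H$ (which is open in $G$ because $H$ is) gives the second; here one also uses that $N \vartriangleleft_o G$ with $N \subseteq H$ forces $N \vartriangleleft_o H$.

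Cofinality then yields equalities of subspectra of $W$,
\[
\bigcup_{N \vartriangleleft_o G} W^N = \bigcup_{N \vartriangleleft_o G,\, N \subseteq H} W^N = \bigcup_{K \vartriangleleft_o H} W^K,
\]
and in each of these three unions the residual $H$-action is simply the restriction to $H$ of the $G$-action on $W$; hence these identifications are isomorphisms of discrete $H$-spectra, and combining them with Theorem~\ref{fibrant} shows that $\colim_{N \vartriangleleft_o G} (\holim_\Delta (\Gamma^\bullet_G \widehat{X}))^N$ is fibrant in $\mathrm{Spt}_H$. I expect the only step requiring real care to be the cofinality claim — in particular, checking that the core of an open subgroup is open — while the bookkeeping that identifies the $H$-actions in the three colimits, and the reduction to Theorem~\ref{fibrant} itself, are purely formal. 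Alternatively, one may avoid this argument entirely and simply invoke \cite[Lemma 3.1]{iterated} (or \cite[Remark 6.26]{Jardine}) for the fibrant discrete $G$-spectrum produced by Theorem~\ref{fibrant}.
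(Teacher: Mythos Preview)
Your proof is correct and follows essentially the same strategy as the paper: both arguments reduce to Theorem~\ref{fibrant} (the paper cites Theorem~\ref{main}, but only the fibrancy assertion, which is Theorem~\ref{fibrant}, is used) by showing via cofinality that the two colimits $\colim_{N \vartriangleleft_o G} W^N$ and $\colim_{K \vartriangleleft_o H} W^K$ are isomorphic as discrete $H$-spectra. The only cosmetic difference is the choice of common cofinal system---you use $\{N \vartriangleleft_o G : N \subseteq H\}$, while the paper passes through the poset $\{V : V <_o G\}$ and the identity $\{H \cap V : V <_o G\} = \{U : U <_o H\}$---but both are routine.
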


\begin{proof} 
By Theorem \ref{main}, the spectrum 
$\colim_{K \vartriangleleft_o H} 
(\holim_\Delta (\Gamma^\bullet_G 
\widehat{X}))^K$ is a fibrant discrete $H$-spectrum. Thus, to verify the corollary, 
it suffices to show that this fibrant discrete $H$-spectrum is isomorphic to 
$\colim_{N \vartriangleleft_o G} (\holim_\Delta (\Gamma^\bullet_G \widehat{X}))^N$ 
in $\mathrm{Spt}_H$. 

\par
Note 
that if $U$ is an open subgroup 
of $H$, then $U$ is also an open subgroup of $G$, so that 
\[\{H \cap V \, | \, V <_o G\} = \{U \, | \, U <_o H\}.\] Also, 
$\{ H \cap V \, | \, V <_o G \}$ and $\{N \, | \, N \vartriangleleft_o G \}$ are cofinal subcollections of the set $\{V \, | \, V<_oG \}$, so that  
\begin{align*}
\colim_{N \vartriangleleft_o G} (\holim_\Delta (\Gamma^\bullet_G 
\widehat{X}))^N & \cong \colim_{V <_o G} 
(\holim_\Delta (\Gamma^\bullet_G 
\widehat{X}))^V \\ & \cong \colim_{V <_o G} 
(\holim_\Delta (\Gamma^\bullet_G 
\widehat{X}))^{H \cap V} \\ & = \colim_{U <_o H} 
(\holim_\Delta (\Gamma^\bullet_G 
\widehat{X}))^U \\ & \cong
\colim_{K \vartriangleleft_o H} 
(\holim_\Delta (\Gamma^\bullet_G 
\widehat{X}))^K.
\end{align*}  
\end{proof}

\section{Understanding $X^{hH}$ when $H$ is closed, and not open, in $G$}\label{hH}
In this section, we use the explicit fibrant model of Theorem \ref{main} 
to improve (somewhat) our 
understanding of $X^{hH}$, for $X \in \mathrm{Spt}_G$, when $G$ has finite 
vcd and $H$ is a closed non-open subgroup of $G$.  

\par
In the Introduction, we pointed out that 
if $L$ is any profinite group, $Z \in \mathrm{Spt}_L$, $Z \rightarrow Z_{f,L}$ is a trivial cofibration, and 
$Z \rightarrow Z^{f,L}$ is a weak equivalence, with $Z_{f,L}$ and $Z^{f,L}$ fibrant 
- all in $\mathrm{Spt}_L$, then $Z^{hL}=(Z_{f,L})^L \rightarrow (Z^{f,L})^L$ is a weak 
equivalence, so that we can identify $Z^{hL}$ and $(Z^{f,L})^L$. For the upcoming 
discussion, we make this identification explicit. 

\begin{Def}\label{def}
If $L$ is a profinite group and $Z$ is a discrete $L$-spectrum, we 
define $Z^{hL} = (Z_{f,L})^L,$ where $Z \rightarrow Z_{f,L}$ is a weak equivalence 
and $Z_{f,L}$ is fibrant, all in $\mathrm{Spt}_L$.
\end{Def}

\begin{Thm}\label{H}
If $G$ has finite vcd, $H$ is a closed subgroup of $G$, and $X$ is a discrete 
$G$-spectrum, then 
\[X^{hH} \cong  (\holim_\Delta (\Gamma^\bullet_G 
\widehat{X}))^H.\] 
\end{Thm}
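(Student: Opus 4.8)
The plan is to combine Theorem \ref{main} with Theorem \ref{iso}, using the identification of $X^{hH}$ from Definition \ref{def}. By Theorem \ref{main}, the map $\Psi \: X \rightarrow \colim_{K \vartriangleleft_o H} (\holim_\Delta \Gamma^\bullet_G \widehat{X})^K$ is a weak equivalence in $\mathrm{Spt}_H$ whose target is a fibrant discrete $H$-spectrum. Hence this target is a globally fibrant model for $X$ in $\mathrm{Spt}_H$ (in the sense of the Introduction), so Definition \ref{def} gives
\[X^{hH} \cong \Bigl(\colim_{K \vartriangleleft_o H} (\holim_\Delta \Gamma^\bullet_G \widehat{X})^K\Bigr)^H.\]
Thus the entire content of the theorem is the identification of the $H$-fixed points of that colimit with $(\holim_\Delta \Gamma^\bullet_G \widehat{X})^H$.

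First I would observe that, by Theorem \ref{iso} applied to the group $H$ and the $\Delta$-diagram $\Gamma^\bullet_G \widehat{X}$ of discrete $H$-spectra, the source of $\Psi$ is precisely $\holimH_\Delta \Gamma^\bullet_G \widehat{X}$. So the claim reduces to showing
\[\bigl(\holimH_\Delta \Gamma^\bullet_G \widehat{X}\bigr)^H \cong \bigl(\holim_\Delta \Gamma^\bullet_G \widehat{X}\bigr)^H.\]
The natural way to see this is to recall that for any $Y \in \mathrm{Spt}_H$ one has $Y \cong \colim_{K \vartriangleleft_o H} Y^K$, so taking $H$-fixed points of a discrete $H$-spectrum commutes through this colimit: $(\colim_{K} Y^K)^H = \colim_K (Y^K)^H = Y^H$, since each $Y^K$ already carries the subspace $H$-action and $(Y^K)^H = Y^H$ for $K$ small enough — more directly, $(\colim_K Z^K)^H = \colim_K (Z^K \cap Z^H) = Z^H$ for a $G$-spectrum $Z$, by the same bookkeeping with fixed-point subspectra used in the proof of Theorem \ref{iso}. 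Applying this with $Z = \holim_\Delta \Gamma^\bullet_G \widehat{X}$ (regarded as an $H$-spectrum via its $H$-action, not necessarily discrete) gives
\[\bigl(\colim_{K \vartriangleleft_o H} (\holim_\Delta \Gamma^\bullet_G \widehat{X})^K\bigr)^H \cong (\holim_\Delta \Gamma^\bullet_G \widehat{X})^H,\]
which is exactly what is needed, since the left-hand side is $X^{hH}$ by the first paragraph.

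I expect the only subtlety to be making the identification $(\colim_{K} Z^K)^H \cong Z^H$ precise at the level of spectra: one works simplicial-degree by simplicial-degree and level by level, where it becomes the elementary set-theoretic identity $\bigl(\bigcup_{K \vartriangleleft_o H} S^K\bigr)^H = S^H$ for a (not necessarily discrete) $H$-set $S$ — the inclusion $\subseteq$ is trivial, and $\supseteq$ holds because $S^H \subseteq S^K$ for every $K$ and $S^H$ is already fixed by $H$. This is exactly the style of argument carried out for open normal subgroups inside the proof of Theorem \ref{iso}, so I would simply cite that computation rather than repeat it. No finiteness or vcd hypothesis is needed for this last step; the finite vcd assumption is used only through Theorem \ref{main} to know that the explicit colimit is a globally fibrant model for $X$ over $H$.
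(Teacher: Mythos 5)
Your proposal is correct and takes essentially the same route as the paper: both identify $X^{hH}$ as the $H$-fixed points of the fibrant model supplied by Theorem \ref{main} via Definition \ref{def}, and then collapse $\bigl(\colim_{K \vartriangleleft_o H}(\holim_\Delta \Gamma^\bullet_G \widehat{X})^K\bigr)^H$ using the fixed-point identity from the proof of Theorem \ref{iso}, with $H$ viewed as an open normal subgroup of itself. The detour through Theorem \ref{iso} to rewrite the colimit as $\holimH_\Delta \Gamma^\bullet_G \widehat{X}$ (and the slip of calling it the source, rather than the target, of $\Psi$) is harmless but not needed.
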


\begin{proof}
By Definition \ref{def} and Theorem \ref{main}, 
\[X^{hH} = (\colim_{K \vartriangleleft_o H} 
(\holim_\Delta (\Gamma^\bullet_G 
\widehat{X}))^K)^H.\] As in the proof of Theorem \ref{iso}, since $H$ is 
an open normal subgroup of itself, this expression simplifies to give the 
desired result.
\end{proof}

\begin{Rk}
Suppose that the hypotheses of Theorem \ref{H} are satisfied. 
In \cite[Remark 7.13]{cts}, by using a different argument, we noted 
that there are certain weak equivalences that permit 
$(\holim_\Delta (\Gamma^\bullet_G 
\widehat{X}))^H$ to be taken as a definition of $X^{hH}$. (To be precise, 
\cite[Remark 7.13]{cts} uses ``$X_{f,G}$" instead of $\widehat{X}$ in 
the expression $(\holim_\Delta (\Gamma^\bullet_G 
\widehat{X}))^H$, but it is not 
hard to see that the remark is still valid with $\widehat{X}$ in place of ``$X_{f,G}$.") 
However, Theorem 
\ref{H} puts this definition on firmer ground by showing that it comes from taking 
the $H$-fixed points of a fibrant replacement for $X$.
\end{Rk}

\par
For this paragraph and the next, let $X$ be a discrete $G$-spectrum and let 
$H$ be a closed proper normal subgroup of $G$, such that $H \neq \{e\}$. 
Then $G/H$ is a profinite group and 
the $H$-fixed point spectrum $X^H$ is a $G/H$-spectrum. However, as noted in 
\cite[Sections 1 and 3]{iterated}, the corresponding 
situation is more complicated with $H$-homotopy 
fixed points. It is natural to expect 
$X^{hH}$ to be a $G/H$-spectrum, but, given an arbitrary fibrant replacement 
$X_{f,H}$ (as guaranteed by the model category axioms), in general, there is no 
reason to assume that $X_{f,H}$ carries a $G$-action, so that $(X_{f,H})^H$ 
need not be a $G/H$-spectrum. For example, if $H$ has finite vcd, 
$X_{f,H}$ can be taken to be $\colim_{K \vartriangleleft_o H} 
(\holim_\Delta (\Gamma^\bullet_H 
\widehat{X}))^K$, a discrete $H$-spectrum that is a $G$-spectrum only when 
additional hypotheses are satisfied.

\par
If $H$ is open in $G$, then, as recalled at the end of the preceding section, 
$X_{f,G}$ is a fibrant discrete $H$-spectrum, so that $X^{hH} = (X_{f,G})^H$ is 
easily seen to be a $G/H$-spectrum. However, if $H$ is not open in $G$, then 
the situation is much more complicated: it is natural to wonder if the map 
\[(r^G_H)^H \: (X_{f,G})^H \rightarrow (X_{f,H})^H = X^{hH}\] 
is a weak equivalence, but the 
evidence indicates that it does not have to be (even when $G$ has finite vcd), since, for example, $X_{f,G}$ is not 
known to be fibrant in $\mathrm{Spt}_H$. (There is no example known 
of $(r^G_H)^H$ failing to be a weak equivalence, but there are several 
arguments indicating that there should be such examples - see 
\cite[Sections 1, 3, and 4]{iterated} and \cite[Sections 3.5 and 3.6]{joint} for 
more discussion on this issue.) 
Thus, the theory of homotopy 
fixed points is faced with the undesirable fact that, in general, when $H$ is not 
open in $G$, it is not known 
how to show that $X^{hH}$ is a $G/H$-spectrum. However, 
Theorem \ref{H} immediately implies the following result, for the case when $G$ 
has finite vcd.  

\begin{Cor}\label{G/H}
If $G$ has finite vcd, with $H$ a closed normal subgroup of $G$, and $X$ is a discrete 
$G$-spectrum, then $X^{hH}$, when taken to be 
\[X^{hH} \cong  (\holim_\Delta (\Gamma^\bullet_G \widehat{X}))^H,\] 
is a $G/H$-spectrum.
\end{Cor}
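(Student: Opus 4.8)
The plan is to derive Corollary \ref{G/H} as a direct consequence of Theorem \ref{H} together with the observation that the concrete model appearing there carries a natural $G$-action compatible with the $H$-action. First I would recall that Theorem \ref{H} gives the identification $X^{hH} \cong (\holim_\Delta (\Gamma^\bullet_G \widehat{X}))^H$, so it suffices to explain why the right-hand side is a $G/H$-spectrum. The key point is that the cosimplicial spectrum $\Gamma^\bullet_G \widehat{X}$ is built entirely from $G$-equivariant data: each term $(\Gamma^\bullet_G \widehat{X})^n \cong \mathrm{Map}_c(G^{n+1}, \widehat{X})$ is a discrete $G$-spectrum (using the action described in Definition \ref{triple}), and the cosimplicial structure maps are $G$-equivariant, so $\holim_\Delta (\Gamma^\bullet_G \widehat{X})$ inherits a $G$-action as the (homotopy) limit of a diagram of $G$-spectra. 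Since the $G$-action restricts to the $H$-action, taking $H$-fixed points leaves a residual action of $G/H$, exactly as for the ordinary fixed point spectrum $X^H$.

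Concretely, the steps are: (1) invoke Theorem \ref{H} to reduce to showing $(\holim_\Delta (\Gamma^\bullet_G \widehat{X}))^H$ is a $G/H$-spectrum; (2) observe that $\widehat{X} = \colim_{N \vartriangleleft_o G}(X^N)_\mathtt{f}$ has a $G$-action (noted in its defining paragraph), hence each $\mathrm{Map}_c(G^{n+1}, \widehat{X})$ is a $G$-spectrum via the action on $\widehat{X}$ combined with translation on the $G^{n+1}$ coordinate, and these assemble into a cosimplicial $G$-spectrum; (3) conclude that $\holim_\Delta (\Gamma^\bullet_G \widehat{X})$ is a $G$-spectrum, because the homotopy limit (being an equalizer of products of cotensors, all of which are formed $G$-equivariantly when the diagram lives in $G$-spectra) carries a $G$-action; (4) take $H$-fixed points and note that, since $H$ is normal in $G$, the subspectrum $(\holim_\Delta (\Gamma^\bullet_G \widehat{X}))^H$ is preserved by the $G$-action and the $H$-action on it is trivial, so the action factors through $G/H$. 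This exhibits $X^{hH}$ as a $G/H$-spectrum.

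I do not expect a genuine obstacle here, since the corollary is labeled as following \emph{immediately} from Theorem \ref{H}; the only thing requiring a word of care is step (3)—checking that the formation of the homotopy limit really is compatible with the ambient $G$-action. This is where one should be slightly careful about which category the homotopy limit is taken in: here $\Gamma^\bullet_G \widehat{X}$ is a cosimplicial object of $\mathrm{Spt}_G$, and one wants the underlying spectrum of $\holim_\Delta^G$ to agree (via the forgetful functor, which is a left adjoint by \cite[Corollary 3.8]{cts}, but here we want compatibility of the $G$-action) with $\holim_\Delta$ of the underlying cosimplicial spectrum. In fact the cleanest route is to note that $\holim_\Delta (\Gamma^\bullet_G \widehat{X})$, as the underlying spectrum of $\holim^G_\Delta \Gamma^\bullet_G \widehat{X}$ stripped of discreteness—or more simply, as an equalizer of products of cotensors by the simplicial sets $B(\Delta \downarrow [n])$, each step of which is functorial—manifestly inherits the $G$-action levelwise from $\widehat{X}$.

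Thus the proof is essentially a remark: combine Theorem \ref{H} with the evident $G$-equivariance of the construction $\Gamma^\bullet_G \widehat{X}$ and of $\holim_\Delta$, then pass to $H$-fixed points and factor the residual action through $G/H$ using normality of $H$. The brevity is appropriate, mirroring the one-line proof style already used for the neighboring corollaries in this section.
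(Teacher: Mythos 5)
Your proposal is correct and matches the paper's own treatment: the paper gives no separate proof, stating just before the corollary that it follows immediately from Theorem \ref{H}, since $\Gamma^\bullet_G \widehat{X}$ is a cosimplicial discrete $G$-spectrum, its homotopy limit in $\mathrm{Spt}$ inherits the $G$-action, and normality of $H$ makes the $H$-fixed points a $G/H$-spectrum. Your step (3) care about the equivariance of $\holim_\Delta$ is the right (and only) point needing a word, and your argument handles it adequately.
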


\par
In the context of Corollary \ref{G/H}, it is natural to inquire about the $G/H$-homotopy 
fixed points of $X^{hH}$. But there are nontrivial issues involved here; we refer 
the reader to \cite{iterated} for more on this topic.

\end{document}